\numberwithin{equation}{section}
\numberwithin{figure}{section}
  \theoremstyle{remark}
  \newtheorem*{acknowledgement*}{\protect\acknowledgementname}
\theoremstyle{plain}
\newtheorem{thm}{\protect\theoremname}[section]
  \theoremstyle{remark}
  \newtheorem{rem}[thm]{\protect\remarkname}
  \theoremstyle{plain}
  \newtheorem{lem}[thm]{\protect\lemmaname}
  \theoremstyle{definition}
  \newtheorem{example}[thm]{\protect\examplename}
  \theoremstyle{plain}
  \newtheorem*{conjecture*}{\protect\conjecturename}
\renewcommand{\epsilon}{\varepsilon}
\renewcommand{\phi}{\varphi}
\DeclareMathOperator{\Irr}{Irr}
\DeclareMathOperator{\Ind}{Ind}
\DeclareMathOperator{\Tr}{tr}
\DeclareMathOperator{\tr}{tr}
\DeclareMathOperator{\Ker}{Ker}
\DeclareMathOperator{\Hom}{Hom}
\DeclareMathOperator{\GL}{GL}
\DeclareMathOperator{\SL}{SL}
\DeclareMathOperator{\M}{M}
\DeclareMathOperator{\Lie}{Lie}
\DeclareMathOperator{\chara}{char} 
\newcommand{\iso}{\mathbin{\kern.15em\widetilde{\hphantom{\hspace{.6em}}}\kern-.98em\rightarrow\kern.05em}}
\newcommand{\longiso}{\mathbin{\kern.3em\widetilde{\hphantom{\hspace{1.1em}}}\kern-1.55em\longrightarrow\kern.1em}}
\newcommand{\mfg}{\mathfrak{g}}
\newcommand{\mfp}{\mathfrak{p}}
\newcommand{\mfo}{\mathfrak{o}}
\newcommand{\mfA}{\mathfrak{A}}
\newcommand{\mfP}{\mathfrak{P}}
\newcommand{\C}{\ensuremath{\mathbb{C}}}
\newcommand{\F}{\ensuremath{\mathbb{F}}}
\newcommand{\Q}{\ensuremath{\mathbb{Q}}}
\newcommand{\Z}{\ensuremath{\mathbb{Z}}}
\newcommand{\Amin}{\mfA_{\mathrm{m}}} 
\newcommand{\Pmin}{\mfP_{\mathrm{m}}}
\newcommand{\Umin}{U_{\mathrm{m}}}
\newcommand{\Amax}{\mfA_{\mathrm{M}}} 
\newcommand{\Pmax}{\mfP_{\mathrm{M}}}
\newcommand{\Umax}{U_{\mathrm{M}}}
\newcommand{\Hmin}{H_{\mathrm{m}}}
\newcommand{\Jmin}{J_{\mathrm{m}}}
\newcommand{\Hmax}{H_{\mathrm{M}}}
\newcommand{\Jmax}{J_{\mathrm{M}}}
\newcommand{\JmM}{J_{\mathrm{m,M}}}
\newcommand{\thetam}{\theta_{\mathrm{m}}}
\newcommand{\thetaM}{\theta_{\mathrm{M}}}
\newcommand{\etam}{\eta_{\mathrm{m}}}
\newcommand{\etaM}{\eta_{\mathrm{M}}}
\newcommand{\hatetaM}{\hat{\eta}_{\mathrm{M}}}
\newcommand{\emin}{e_{\mathrm{m}}}
\newcommand{\emax}{e_{\mathrm{M}}}
\newcommand{\mfr}{\mathfrak{r}}
\newcommand{\mfj}{\mathfrak{j}}
  \providecommand{\acknowledgementname}{Acknowledgement}
  \providecommand{\conjecturename}{Conjecture}
  \providecommand{\examplename}{Example}
  \providecommand{\lemmaname}{Lemma}
  \providecommand{\remarkname}{Remark}
\providecommand{\theoremname}{Theorem}
\begin{document}
\title[Representations of $\GL_N$ - an overview]{Representations of $\GL_N$ over finite local principal ideal rings - an overview}

\author{Alexander Stasinski}
\begin{abstract}
We give a survey of the representation theory of $\GL_{N}$ over finite
local principal ideal rings via Clifford theory, with an emphasis
on the construction of regular representations. We review results
of Shintani and Hill, and the generalisation of Takase. We then summarise
the main features, with some details but without proofs, of the recent
constructions of regular representations due to Krakovski\textendash Onn\textendash Singla
and Stasinski\textendash Stevens, respectively.
\end{abstract}

\address{Department of Mathematical Sciences, Durham University, South Rd,
Durham, DH1 3LE, UK}

\email{alexander.stasinski@durham.ac.uk}
\maketitle

\section{Introduction}

This paper is a survey of the (complex) representation theory of the
group $\GL_{N}(\mfo)$, where $\mfo$ is a compact discrete valuation
ring, or equivalently, the ring of integers in a non-Archimedean local
field with finite residue field $\F_{q}$ of characteristic $p$.
Since $\GL_{N}(\mfo)$ is a profinite group, we consider its continuous
representations, and a representation is continuous if and only if
it is smooth if and only if it factors through a finite quotient $\GL_{N}(\mfo_{r})$,
where $\mfo_{r}:=\mfo/\mfp^{r}$, $\mfp$ is the maximal ideal in
$\mfo$, and $r\geq1$. We therefore focus on the representations
of the finite groups $\GL_{N}(\mfo_{r})$.

The representation theory of $\GL_{N}(\mfo_{r})$ has a relatively
long history (see the historical notes in Section~\ref{sec:Historical-overview}),
and has very recently seen intensified activity from several directions.
We will focus mostly on the recent developments regarding so-called
\emph{regular} representations, studied via Clifford theory. Regular
representations roughly correspond to regular conjugacy classes of
matrices in the Lie algebra $\mfg_{r}=\M_{N}(\mfo_{r})$, that is,
matrices whose centralisers mod $\mfp$ have dimension $N$. The first
construction of this kind goes back to Shintani \cite{Shintani-68},
who constructed all the regular representations when $r$ is even.
This was followed by work of Hill \cite{Hill_regular}, who rediscovered
Shintani's construction and also provided a partial construction of
so-called split regular representations for $r$ odd. As we will see
in subsequent sections, the representation theory of $\GL_{N}(\mfo_{r})$
is much harder when $r$ is odd compared to when $r$ is even. Very
recently it was realised by Takase \cite{Takase-16} that Hill's construction
does not actually produce all the split regular representations. Furthermore,
Takase gave a construction of all regular representations which correspond
to conjugacy classes with separable characteristic polynomial mod
$\mfp$, assuming the residue characteristic $p$ of $\mfo$ is not
$2$. At the same time, and independently, two general constructions
of regular representations have been found. One is by Krakovski, Onn
and Singla \cite{KOS}, which works whenever $p$ is not $2$, and
the other is by Stasinski and Stevens \cite{SS/2016}. The latter
works for any $\mfo$, and we therefore now have a complete construction
of all the regular representations of $\GL_{N}(\mfo_{r})$. This is
currently the most general uniform construction of irreducible representations
of $\GL_{N}(\mfo_{r})$ available.

In Section~\ref{sec:Clifford-theory} we give an introduction to
the Clifford theory approach to the representations of $\GL_{N}(\mfo_{r})$.
In Section~\ref{sec:Reg-reps} we define regular representations
and give the construction when $r$ is even. In Sections~\ref{sec:Hill-Takase}-\ref{sec:SS}
we then focus on the various constructions of regular representations
for $r$ odd. Section~\ref{sec:Hill-Takase} contains a summary of
Hill's and Takase's constructions of regular semisimple representations.
In Section~\ref{sec:KOS} we give an outline of the construction
of Krakovski, Onn and Singla. Finally, in Section~\ref{sec:SS} we
elaborate on the main steps in the construction of Stasinski and Stevens.
In the final Section~\ref{sec:Open-problems} we mention some open
problems.

Throughout the paper we have omitted most proofs, apart from the proofs
of some occasional lemmas. On the other hand, we have tried to provide
detailed explanations of many of the arguments.
\begin{acknowledgement*}
I wish to thank the organisers of the conference ``Around Langlands
correspondences'', at the Universit\'e Paris-Sud in June 2015 for
the opportunity to talk about the work \cite{SS/2016}. I am grateful
to Shaun Stevens for many helpful conversations, and to Uri Onn for
explaining some details of \cite{KOS}.
\end{acknowledgement*}

\section{Historical overview\label{sec:Historical-overview}}

The characters of $\GL_{N}(\mfo_{1})=\GL_{N}(\F_{q})$ were determined
in a classical paper of Green \cite{Green} and the representations
can be constructed via Deligne-Lusztig theory. The representations
of the finite groups $\mbox{GL}_{2}(\mfo_{r})$, for all $r\geq1$,
have in one form or another been known to some mathematicians since
the late 70s. There have been at least two different approaches to
this problem. On the one hand, there is the Weil representation approach
of Nobs and Wolfart (applied to the case $\mathfrak{o}=\mathbb{Z}_{p}$
in \cite{Nobs-GL2}). On the other hand, there is the approach via
orbits and Clifford theory due to Kutzko (unpublished), and independently
to Nagornyj \cite{nagornyj1}; see also \cite{Alex_smooth_reps_GL2}.

The related case of $\mathrm{SL}_{2}(\mathbb{Z}_{p})$, $p\neq2$,
was studied by Kloosterman \cite{Kloosterman_I,Kloosterman_II}, Tanaka
\cite{Tanaka1,Tanaka2}, Kutzko (thesis; unpublished), and Shalika
(for general $\mathfrak{o}$ and $p\neq2$) \cite{Shalika}. The Clifford
theoretic approach of Kutzko and Shalika was rediscovered by Jaikin-Zapirain
in \cite[Section~7]{Jaikin-zeta}. Another description of the representations
of $\mathrm{SL}_{2}(\mathbb{Z}_{p})$ (including the much more difficult
case $p=2$) was obtained by Nobs and Wolfart \cite{MR0444787,MR0444788}
using Weil representations. The case $\mathrm{PGL}_{2}(\mathfrak{o})$,
again with $p\neq2$, was treated by Silberger \cite{Silberger}. 

The representations of $\GL_{3}(\mathfrak{o})$ were studied by Nagornyj
in \cite{Nagornyj2}, but the construction of representations was
left incomplete. However, it was shown in \cite{Nagornyj2} that the
classification of representations of $\GL_{N}(\mathfrak{o})$ is a
so-called wild problem, and in general one can therefore not expect
an explicit and surveyable parametrisation of all the representations. 

Recently, thorough and in-depth work on the representations of $\GL_{3}(\mfo)$
and $\SL_{3}(\mathfrak{o})$ (and related groups) has appeared in
a series of papers by Avni, Klopsch, Onn and Voll; see, for example,
\cite{AKOV-p-adic--arithm-grps,AKOV-similarityclasses-A2}. The results
in \cite{AKOV-p-adic--arithm-grps} are based, among other things,
on the Kirillov orbit method, which works for principal congruence
subgroups of $\SL_{3}(\mathfrak{o})$ of index large enough compared
to $p$, and only when $\mfo$ has characteristic $0$. In \cite{AKOV-similarityclasses-A2}
the authors employ Clifford theoretic methods to count the representations
of $\GL_{3}(\mfo_{r})$ and $\SL_{3}(\mfo_{r})$ (when $\chara\mfo=0$
or $\chara\mfo=p$ and $p$ is large enough relative to $r$) and
of $\SL_{3}(\mathfrak{o})$ (when $\chara\mfo=0$ and $p$ is large
enough relative to the absolute ramification index of $\mfo$). Analogous
results are obtained for unitary groups corresponding to an unramified
extension of $\mfo$.

For the groups $\GL_{N}(\mfo_{r})$ with $N\geq2$, $r\geq2$, the
first general results seem to be due to Shintani in 1968 \cite{Shintani-68},
who constructed the so-called \emph{regular} representations when
$r$ is even. This construction was rediscovered in independent work
of Hill around 1995 \cite{Hill_regular}. The series of papers by
Hill \cite{Hill_Jordan,Hill_nilpotent,Hill_regular,Hill_semisimple_cuspidal}
use the method of orbits and Clifford theory to study and construct
some of the representations of $\GL_{N}(\mfo_{r})$. In particular,
in addition to Shintani's construction of regular representations
for $r$ even, Hill went on to construct certain regular representations
when $r$ is odd. Over twenty years after Hill's work, it was realised
by Takase \cite{Takase-16} that Hill's construction of split regular
representations is not exhaustive when the orbit is not semisimple.
As mentioned in the introduction, recent constructions of regular
representations have led to successively more general results, so
that we now have a complete construction of all the regular representations
of $\GL_{N}(\mfo_{r})$. We will give a more detailed description
of this work in subsequent sections.

Another approach to the representation theory of $\GL_{N}(\mfo_{r})$
is based on viewing this group as the automorphism group of a rank
$N$ $\mfo$-module. This was initiated by Onn \cite{Uri-rank-2},
who defined a new type of induction functor, called infinitesimal
induction, for general automorphism groups of $\mathfrak{o}$-modules
of residual rank $N$. Infinitesimal induction complements the classical
induction from parabolic subgroups, which in \cite{Uri-rank-2} is
referred to as geometric induction\emph{.} Decomposing these induced
representations and using the known construction of (strongly) cuspidal
representations for $\mbox{GL}_{2}(\mathfrak{o})$, leads to another
classification of the representations of this group.

Finally, we mention a different approach to the representations of
$\GL_{N}(\mfo_{r})$, or more generally, for reductive groups over
$\mfo_{r}$. This approach is a cohomological construction of certain
irreducible representations attached to characters of finite maximal
tori. It was given by Lusztig in \cite{Lusztig-Fin-Rings} in the
case where $\mfo$ has positive characteristic, and for arbitrary
$\mfo$ in \cite{Alex_Unramified_reps}. This is a higher level generalisation
of the classical construction of Deligne and Lusztig \cite{delignelusztig},
which corresponds to the case $r=1$. Another, ``purely algebraic''
(non-cohomological) construction of representations of certain split
reductive groups, also attached to characters of finite maximal tori,
was given by G\'erardin \cite{Gerardin}. In \cite[Section~1]{Lusztig-Fin-Rings}
Lusztig suggested the problem of whether these representations are
in fact the same as those given by the higher Deligne-Lusztig construction.
This was recently answered in the affirmative for $r$ even by Chen
and Stasinski \cite{Chen-Stasinski}. 

\section{Clifford theory for $\GL_{N}(\mfo_{r})$\label{sec:Clifford-theory}}

If $G$ is a finite group, we will write $\Irr(G)$ for the set of
isomorphism classes of complex irreducible representations of $G$.
For convenience, we will always consider an element $\rho\in\Irr(G)$
as a representation, rather than an equivalence class of representations,
that is, we identify $\rho\in\Irr(G)$ with any representative of
the isomorphism class $\rho$. One can view $\Irr(G)$ as the set
of irreducible characters of $G$, but we prefer to work with representations
when possible. If $G$ is abelian, we will often refer to a one-dimensional
representation of $G$ as a character. If $H\subseteq G$ is a subgroup
and $\rho$ is any representation of $G$ we write $\rho|_{H}$ for
the restriction of $\rho$ to $H$.

Let $\mathfrak{o}$ be a compact discrete valuation ring, that is,
the ring of integers in a non-Archimedean local field with finite
residue field, say $\F_{q}$, of characteristic $p$. Denote by $\mathfrak{p}$
the maximal ideal of $\mathfrak{o}$, and by $\varpi$ a fixed generator
of $\mfp$. For any integer $r\geq1$ we write $\mathfrak{o}_{r}$
for the finite ring $\mathfrak{o}/\mathfrak{p}^{r}$. We will also
use $\mfp$ and $\varpi$ to denote the corresponding images of $\mfp$
and $\varpi$ in $\mathfrak{o}_{r}$. Fix an integer $N\geq2$ and,
for any $r\geq1$, put 
\begin{align*}
G_{r} & =\GL_{N}(\mfo_{r}),\\
\mfg_{r} & =\M_{N}(\mfo_{r}),
\end{align*}
where, for a commutative ring $R$, we use $\M_{N}(R)$ to denote
the algebra of $N\times N$ matrices over $R$. From now on, we consider
a fixed $r\geq2$. For any integer $i$ such that $r\geq i\geq1$,
let $\rho_{i}=\rho_{r,i}:G_{r}\rightarrow G_{i}$ be the surjective
homomorphism induced by the canonical map $\mathfrak{o}_{r}\rightarrow\mathfrak{o}_{i}$,
and write $K^{i}=K_{r}^{i}=\Ker\rho_{i}$. We also write $\rho_{i}$
for the corresponding homomorphism $\mfg_{r}\rightarrow\mfg_{i}$.
We thus have a descending chain of subgroups
\[
G_{r}\supset K^{1}\supset\dots\supset K^{r}=\{1\},
\]
where
\[
K^{i}=1+\mfp^{i}\mfg_{r}.
\]
With this description of the kernels, it is easy to show the commutator
relation $[K^{i},K^{j}]\subseteq K^{\min(i+j,r)}$, for $r\geq i,j\geq1$.
In particular, if $i\geq r/2$, then $K_{i}$ is abelian, and if we
let $l=\lceil\frac{r}{2}\rceil$, then $K^{l}$ is the maximal abelian
group among the kernels $K^{i}$. From now on, let $i\geq r/2$, that
is, $i\geq l$. Then the map $x\mapsto1+\varpi^{i}x$ induces an isomorphism
\begin{equation}
\mfg_{r-i}\longiso K^{i}.\label{eq:Mn-Ki-iso}
\end{equation}
The group $G_{r}$ acts on $\mfg_{r-i}$ by conjugation, via its quotient
$G_{r-i}$. This action is transformed by the above isomorphism into
the action of $G_{r}$ on its normal subgroup $K^{i}$. Let $F$ be
the fraction field of $\mfo$. Fix an additive character $\psi:F\rightarrow\mathbb{C}^{\times}$
which is trivial on $\mfo$ but not on $\mfp^{-1}$. For each $r\geq1$
we can view $\psi$ as a character of the group $F/\mfp^{r}$ whose
kernel contains $\mfo_{r}$. We will use $\psi$ and the trace form
$(x,y)\mapsto\tr(xy)$ on $\mfg_{r}$ to set up a duality between
the groups $\Irr(K^{i})$ and $\mfg_{r-i}$. For $\beta\in\M_{N}(\mathfrak{o}_{r})$,
define a homomorphism $\psi_{\beta}:K^{i}\rightarrow\mathbb{C}^{\times}$
by
\begin{equation}
\psi_{\beta}(1+x)=\psi(\varpi^{-r}\tr(\beta x)),\label{eq:character-psi-b}
\end{equation}
for $x\in\mfp^{i}\mfg_{r}$. Note that $\varpi^{-r}\tr(\beta x)$
is a well defined element of $F/\mfp^{r}$. Since $\psi$ is trivial
on $\mfo_{r}$, $\psi_{\beta}$ only depends on $x\mod\mfp^{r-i}$
(as it must in order to be well defined). Moreover, the map $\beta\mapsto\psi_{\beta}$
is a homomorphism whose kernel is $\mfp^{r-i}\mfg_{r}$, thanks to
the non-degeneracy of the trace form. Hence it induces an isomorphism
\[
\mfg_{r}/\mfp^{r-i}\mfg_{r}\longiso\Irr(K^{i}),
\]
where we will usually identify $\mfg_{r}/\mfp^{r-i}\mfg_{r}$ with
$\mfg_{r-i}$. For $g\in G_{r}$ we have 
\[
\psi_{g\beta g^{-1}}(x)=\psi(\varpi^{i-r}\tr(g\beta g^{-1}x))=\psi(\varpi^{i-r}\tr(\beta g^{-1}xg))=\psi_{\beta}(g^{-1}xg).
\]
Thus the isomorphism (\ref{eq:Mn-Ki-iso}) transforms the action of
$G_{r}$ on $\mfg_{r}$ into (the inverse) conjugation of characters.
\begin{rem}
In the above we have used adjoint orbits (i.e., conjugacy classes)
in $\mfg_{r}/\mfp^{r-i}\mfg_{r}$ to parametrise orbits of characters
of $K^{i}$. From some points of view it is more natural to use co-adjoint
orbits in the dual
\[
\mfg_{r}^{*}:=\Hom_{\mfo_{r}}(\mfg_{r},\mfo_{r}).
\]
Indeed the pairing $\langle\,\cdot\,,\,\cdot\,\rangle:\mfg_{r}^{*}\times\mfg_{r}\rightarrow\mfo_{r}$
given by $\langle f,\beta\rangle=f(\beta)$ is non-degenerate and
one can define
\[
\psi_{f}(1+x)=\psi(\varpi^{-r}\langle f,x\rangle),
\]
where $f\in\mfg_{r}^{*}$. This induces an isomorphism $\mfg_{r}^{*}/\mfp^{r-i}\mfg_{r}^{*}\cong\mfg_{r-i}^{*}\iso\Irr(K^{i})$,
and has the advantage of generalising to Chevalley groups other than
$\GL_{N}$ (where the trace form may be degenerate); see \cite{Gerardin}.
However, for $\GL_{N}$ we prefer to work with elements in $\mfg_{r}$
rather than elements in its dual, and we can translate between the
two by means of the $G_{r}$-equivariant bijection induced by the
trace form.
\end{rem}
If $G$ is a finite group, $H\subseteq G$ is a subgroup and $\rho\in\Irr(H)$,
we will write $\Irr(G\mid\rho)$ for the set of $\pi\in\Irr(G)$ such
that $\pi$ contains $\rho$ on restriction to $H$, that is,
\[
\Irr(G\mid\rho)=\{\pi\in\Irr(G)\mid\langle\pi|_{H},\rho\rangle\neq0\}.
\]
Moreover, if $N$ is a normal subgroup of $G$, then $G$ acts on
$\Irr(N)$ by $\rho\mapsto{}^{g}\rho$, where $^{g}\rho(n):=\rho(gng^{-1})$,
for $g\in G$, $n\in N$. In this case, we define the \emph{stabiliser}
of $\rho\in\Irr(N)$ to be $G(\rho)=\{g\in G\mid{}^{g}\rho\cong\rho\}$.
We will subsequently make use of the following well known results
from Clifford theory of finite groups:
\begin{thm}
\label{thm:Clifford}Let $G$ be a finite group, and $N$ a normal
subgroup. Then the following hold:

\begin{enumerate}
\item \label{Clifford1}(Clifford's theorem) If $\pi\in\Irr(G)$, then $\pi|_{N}=e\bigoplus_{\rho\in\Omega}\rho$,
where $\Omega\subseteq\Irr(N)$ is an orbit under the action of $G$
on $\Irr(N)$ by conjugation, and $e$ is a positive integer.
\item \label{Clifford2}Suppose that $\rho\in\Irr(N)$. Then $\theta\mapsto\Ind_{G(\rho)}^{G}\theta$
is a bijection from $\Irr(G(\rho)\mid\rho)$ to $\Irr(G\mid\rho)$.
\item \label{Clifford3}Let $H$ be a subgroup of $G$ containing $N$,
and suppose that $\rho\in\Irr(N)$ has an extension $\tilde{\rho}$
to $H$ (i.e., $\tilde{\rho}|_{N}=\rho$). Then 
\[
\Ind_{N}^{H}\rho=\bigoplus_{\chi\in\Irr(H/N)}\tilde{\rho}\chi,
\]
where each $\tilde{\rho}\chi$ is irreducible, and where we have identified
$\Irr(H/N)$ with $\{\chi\in\Irr(H)\mid\chi(N)=1\}$.
\end{enumerate}
\end{thm}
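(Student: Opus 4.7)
The plan is to prove the three parts in the order they are stated, using only standard character theory over $\bbC$ together with Frobenius reciprocity and Mackey's formula. These are foundational results whose proofs can be found in many textbooks, but I would like to reconstruct the arguments cleanly so as to make clear where the normal subgroup hypothesis enters.

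For (i), I would start from the observation that $\pi|_N$ is semisimple (characteristic $0$), so $\pi|_N = \bigoplus_{\rho \in \Irr(N)} m_\rho \rho$ with multiplicities $m_\rho \geq 0$. The key point is that conjugation by $g \in G$ carries the $\rho$-isotypic component $V_\rho \subseteq V_\pi$ isomorphically onto the $^g\rho$-isotypic component, so $m_\rho = m_{{}^g\rho}$ for all $g \in G$. Hence the multiplicity function is $G$-invariant and supported on a union of $G$-orbits. Finally, the sum $\bigoplus_{\rho \in \Omega} V_\rho$ over any single $G$-orbit $\Omega$ is a $G$-invariant subspace of $V_\pi$, so irreducibility of $\pi$ forces $\Omega$ to be unique. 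I expect this step to be routine.

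For (iii), I would use the projection formula. Since $\tilde\rho$ extends $\rho$, we have $\Ind_N^H \rho = \Ind_N^H(\tilde\rho|_N) \cong \tilde\rho \otimes \Ind_N^H \mathbf{1}_N$, and $\Ind_N^H \mathbf{1}_N$ is the regular representation of $H/N$, which decomposes as $\bigoplus_{\chi \in \Irr(H/N)} \chi$. This yields the decomposition $\Ind_N^H \rho = \bigoplus_{\chi \in \Irr(H/N)} \tilde\rho\chi$. To see that each $\tilde\rho\chi$ is irreducible I would compute $\langle \tilde\rho\chi, \tilde\rho\chi\rangle_H$: its restriction to $N$ is $(\dim\chi) \cdot \rho$, which by Frobenius reciprocity and irreducibility of $\tilde\rho$ reduces the inner product to $1$. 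Distinct characters $\chi$ give non-isomorphic constituents by a similar character-theoretic separation.

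For (ii), which is the main obstacle, the strategy is the Clifford correspondence. Given $\theta \in \Irr(G(\rho) \mid \rho)$, part (i) applied to the pair $N \subseteq G(\rho)$ shows that $\theta|_N$ is $\rho$-isotypic, since the $G(\rho)$-orbit of $\rho$ is a single point by definition of the stabiliser. I would then compute $\langle \Ind_{G(\rho)}^G \theta, \Ind_{G(\rho)}^G \theta\rangle_G$ via Mackey's formula:
\[
\langle \Ind_{G(\rho)}^G \theta, \Ind_{G(\rho)}^G \theta\rangle_G = \sum_{g \in G(\rho)\backslash G/G(\rho)} \langle \Res^{G(\rho)}_{G(\rho) \cap {}^gG(\rho)} \theta, \Res^{{}^gG(\rho)}_{G(\rho) \cap {}^gG(\rho)} {}^g\theta\rangle.
\]
Restricting further to $N$ and using that $\theta|_N$ is $\rho$-isotypic while $({}^g\theta)|_N$ is ${}^g\rho$-isotypic, each summand with $g \notin G(\rho)$ vanishes because ${}^g\rho \not\cong \rho$ on $N$, while the summand $g = 1$ contributes $\langle \theta, \theta\rangle = 1$. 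Hence $\Ind_{G(\rho)}^G \theta$ is irreducible, and it lies in $\Irr(G \mid \rho)$ by Frobenius reciprocity. Conversely, given $\pi \in \Irr(G \mid \rho)$, I would choose an irreducible $G(\rho)$-subrepresentation $\theta$ of the $\rho$-isotypic component of $\pi|_{G(\rho)}$ and show $\Ind_{G(\rho)}^G \theta \cong \pi$ by Frobenius reciprocity together with irreducibility of $\pi$. The hardest step is the vanishing in Mackey's formula, which is exactly where the definition of the stabiliser is used and where the normality of $N$ in $G$ plays a decisive role.
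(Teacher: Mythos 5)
The paper itself gives no proof of this theorem; it simply cites Isaacs \cite{Isaacs} (6.2, 6.11, 6.17), so the comparison here is between your sketch and the standard textbook arguments. Your treatment of (i) is correct and standard. For (ii) you use the Mackey intertwining-number formula plus the observation that $N$ sits inside every conjugate of $G(\rho)$, so disjointness on $N$ propagates upward; this is a clean and widely used alternative to Isaacs' purely character-theoretic argument in 6.11, and the vanishing of off-diagonal Mackey terms is indeed where normality of $N$ and the definition of $G(\rho)$ do the work. You leave injectivity of $\theta\mapsto\Ind\theta$ implicit, but the same Mackey/orbit separation handles it, so that is a harmless omission.

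For (iii), though, there is a genuine slip that you should flag. You assert that $\Ind_{N}^{H}\mathbf{1}_{N}$ ``decomposes as $\bigoplus_{\chi\in\Irr(H/N)}\chi$'', but the regular representation of $H/N$ decomposes as $\bigoplus_{\chi}(\dim\chi)\,\chi$, with multiplicities. Consequently your irreducibility argument only gives
\[
1\le\langle\tilde{\rho}\chi,\tilde{\rho}\chi\rangle_{H}\le\langle\Res_{N}(\tilde{\rho}\chi),\Res_{N}(\tilde{\rho}\chi)\rangle_{N}=(\dim\chi)^{2},
\]
which is conclusive only when $\dim\chi=1$, i.e., when $H/N$ is abelian. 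In general (Gallagher's theorem, Isaacs 6.17) the correct statement is that the $\tilde{\rho}\chi$ are pairwise non-isomorphic and are precisely the irreducible constituents of $\Ind_{N}^{H}\rho$, each occurring with multiplicity $\dim\chi$; the proof then rests on the nontrivial identity $\sum_{n\in N}|\tilde{\rho}(hn)|^{2}=|N|$, which gives $\langle\tilde{\rho}\chi,\tilde{\rho}\chi'\rangle_{H}=\langle\chi,\chi'\rangle_{H/N}$. To be fair, the theorem as stated in the paper also silently assumes multiplicity one, and in every application in this paper $H/N$ is abelian, so your argument does prove exactly what is used; but as a proof of the general statement of Gallagher's theorem it is incomplete, and it would be worth saying explicitly that you are treating the abelian-quotient case.
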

For proofs of the above, see for example \cite{Isaacs}, 6.2, 6.11
and 6.17, respectively. The above results \ref{Clifford1} and \ref{Clifford2}
show that in order to obtain a classification of the representations
of $G_{r}$, it is enough to classify the orbits of characters $\psi_{\beta}$
of a normal subgroup $K^{i}$, and to construct all the elements in
$\Irr(G_{r}(\psi_{\beta})\mid\psi_{\beta})$, that is, to decompose
$\Ind_{K^{i}}^{G_{r}(\psi_{\beta})}\psi_{\beta}$ into irreducible
representations. This is what we shall do in the following, taking
$K^{i}=K^{l}$.
\begin{rem}
\label{rem:construction}By an (algebraic) construction of some irreducible
representations (or characters) of $G_{r}$ via Clifford theory, we
will always mean a general (i.e., valid for all $G_{r}$) finite sequence
of extensions and inductions of characters, starting from the one-dimensional
characters of $K^{l}$. Note that the existence of an extension of
a representations is allowed to be a non-constructive fact. 

In order to have a complete understanding of representations constructed
via Clifford theory, it is necessary to have an understanding of the
$G_{r}$ conjugacy classes (or orbits) in $\mfg_{r}$, because $\Irr(G_{r}(\psi_{\beta})\mid\psi_{\beta})=\Irr(G_{r}(\psi_{\beta'})\mid\psi_{\beta'})$
if $\beta$ and $\beta'$ are conjugate. One cannot expect to have
an explicit understanding of all the orbits, but we \emph{do} have
an explicit normal form for regular orbits, as we will see next.
\end{rem}

\section{\label{sec:Reg-reps}Regular representations, $r$ even}

An irreducible representation $\pi$ of $G_{r}$ is called \emph{regular
}if $\pi|_{K^{l}}$ contains $\psi_{\beta}$ with $\beta\in\mfg_{r}$
regular. By a result of Hill \cite[Theorem~3.6]{Hill_regular} $\beta\in\mfg_{r}$
is regular if and only if its image $\bar{\beta}\in\mfg_{1}=\M_{N}(\F_{q})$
is regular, that is, if $\dim C_{\mfg_{1}}(\bar{\beta})=N$. There
are several equivalent characterisations of regular elements in $\mfg_{1}$;
in particular, $\bar{\beta}\in\mfg_{1}$ is regular iff $C_{\mfg_{1}}(\bar{\beta})$
is abelian iff the characteristic polynomial of $\bar{\beta}$ equals
the minimal polynomial iff $\bar{\beta}$ is conjugate to a companion
matrix. Note that $\beta$ depends on the choice of $\psi$, but for
any other choice $\psi'$ we have $\psi_{\beta}=\psi'_{a\beta}$,
for some $a\in\mfo_{r}^{\times}$, and since $\beta$ is regular if
and only if $a\beta$ is regular, regularity is an intrinsic property
of a representation $\pi\in\Irr(G_{r})$.

There are three special properties of regular elements which will
allow us to construct and completely classify all the regular representations:
\begin{enumerate}
\item We can tell explicitly when two regular elements are $G_{r}$-conjugate,
namely, if and only if their companion matrices coincide.
\item The centraliser $C_{G_{r}}(\beta)$ of a regular element $\beta\in\mfg_{r}$
is abelian.
\item For any $1\leq s\leq r$, the map $\rho_{s}:C_{G_{r}}(\beta)\rightarrow C_{G_{s}}(\beta_{s})$
is surjective, where $\beta_{s}$ is the image of $\beta$ under $\rho_{s}:\mfg\rightarrow\mfg_{s}$.
\end{enumerate}
We will illustrate this in the construction of all regular representations
of $G_{r}$ when $r$ is even, given below.
\begin{rem}
If $\pi\in\Irr(G_{r}\mid\psi_{\beta})$, then (\ref{eq:character-psi-b})
implies that $\pi$ has $K^{r-1}$ in its kernel if and only if $\bar{\beta}=0$.
Thus $\pi$ factors through $G_{r-1}$ if and only if $\bar{\beta}=0$.
If this is the case, $\pi$ is called \emph{imprimitive}. If $\pi$
does not factor through $G_{r-1}$ it is called \emph{primitive}.
Note that a regular representation is necessarily primitive. On the
other hand, there exist irreducible representations of $G_{r}$ which
are not regular, because they factor through $G_{r-1}$, but are regular
when viewed as characters of $G_{r-1}$. For example, take the representations
of $\GL_{2}(\mfo_{4})$ with $\beta=\left(\begin{smallmatrix}0 & \pi\\
0 & 0
\end{smallmatrix}\right)$.
\end{rem}
From now on, let $\psi_{\beta}\in\Irr(K^{l})$ with $\beta\in\mfg_{r}$
regular. Let $l'=r-l$, so that $l=l'$ when $r$ is even and $l'=l-1$
when $r$ is odd. As indicated in the previous section, the stabiliser
$G_{r}(\psi_{\beta})$ plays an important role in the construction
of representations of $G_{r}$. The formula $\psi_{\beta}(g^{-1}xg)=\psi_{g\beta g^{-1}}(x)$,
together with the fact that $\psi_{\beta}=\psi_{\beta'}\Leftrightarrow\beta\equiv\beta'\mod\mfp^{l'}$,
implies that
\begin{equation}
G_{r}(\psi_{\beta})=C_{G_{r}}(\beta+\mfp^{l'}\mfg_{r}).\label{eq:Stabliser-first}
\end{equation}
An important corollary of \cite[Theorem~3.6]{Hill_regular} is that
for regular $\beta$, and any $s$ such that $r\geq s\geq1$, the
natural reduction map
\[
C_{G_{r}}(\beta)\longrightarrow C_{G_{s}}(\beta_{s})
\]
 is surjective. Another corollary of \cite[Theorem~3.6]{Hill_regular}
is that for regular $\beta$ we have $C_{G_{r}}(\beta)=\mfo_{r}[\beta]^{\times}$,
so that in particular, the centraliser is abelian. Together with (\ref{eq:Stabliser-first})
these two results imply that 
\begin{equation}
G_{r}(\psi_{\beta})=C_{G_{r}}(\beta)K^{l'}=\mfo_{r}[\beta]^{\times}K^{l'}.\label{eq:Stabiliser}
\end{equation}

We now give the construction of regular representations of $G_{r}$
in the case when $r$ is even. Suppose that $r$ is even, so that
$l=l'$. Let $\theta\in\Irr(C_{G_{r}}(\beta))$ be any irreducible
component of $\Ind_{C_{G_{r}}(\beta)\cap K^{l}}^{C_{G_{r}}(\beta)}(\psi_{\beta}|_{C_{G_{r}}(\beta)\cap K^{l}})$.
Since $C_{G_{r}}(\beta)$ is abelian $\theta$ is one-dimensional,
and hence it agrees with $\psi_{\beta}$ on $C_{G_{r}}(\beta)\cap K^{l}$.
It is then easy to check that
\[
\tilde{\psi}_{\beta}(ck):=\theta(c)\psi_{\beta}(k)
\]
is a well defined one-dimensional representation of $G_{r}(\psi_{\beta})$,
and by construction it is an extension of $\psi_{\beta}$. By Theorem~\ref{thm:Clifford}\,\ref{Clifford3}
we obtain 
\[
\Irr(G_{r}(\psi_{\beta})\mid\psi_{\beta})=\{\tilde{\psi}_{\beta}\chi\mid\chi\in\Irr(C_{G_{l}}(\beta_{l}))\},
\]
where $\beta_{l}\in\mfg_{l}$ is the image of $\beta$. Hence Theorem~\ref{thm:Clifford}\,\ref{Clifford2}
implies that there is a bijection 
\begin{align*}
\Irr(C_{G_{l}}(\beta_{l})) & \longrightarrow\Irr(G_{r}\mid\psi_{\beta})\\
\chi & \longmapsto\Ind_{G_{r}(\psi_{\beta})}^{G_{r}}\tilde{\psi}_{\beta}\chi.
\end{align*}
Note that this is not canonical, but depends on the choice of $\tilde{\psi}_{\beta}$.
We have thus constructed the irreducible representations of $G_{r}$
containing $\psi_{\beta}$, in terms of the irreducible representations
of the abelian group $C_{G_{l}}(\beta_{l})$ (which we consider known;
cf.~Remark~\ref{rem:construction}). Note that if we start with
another element in the conjugacy class of $\beta$, we obtain the
same set of irreducible representations of $G_{r}$. Thus, when $r$
is even, running through a set of representatives for the regular
conjugacy classes in $\mfg_{l}$, yields all the regular representations
of $G_{r}$ exactly once.

As far as the author is aware, the above construction is due to Shintani
\cite[\S{}2, Theorem~2]{Shintani-68}, although Shintani does not
prove that every regular element in $\mfg_{l}$ is regular mod $\mfp$.
The construction was rediscovered by Hill \cite[Theorem~4.1]{Hill_regular}.\\

It remains to construct the regular representations of $G_{r}$ when
$r$ is odd. This requires additional methods, due to the fact that
$G_{r}(\psi_{\beta})=C_{G_{r}}(\beta)K^{l'}$, and it is not possible
to extend $\psi_{\beta}$ from $K^{l}$ to $G_{r}(\psi_{\beta})$.
Instead, one has to take several intermediate steps consisting of
extensions and inductions. In the following we will give an exposition
of the currently known constructions (sometimes partial) of regular
representations of $G_{r}$, for $r$ odd.

\section{The constructions of Hill and Takase\label{sec:Hill-Takase}}

From now on and until the end of Section~\ref{sec:SS} we will assume
that $r$ is odd, so that $l':=r-l=l-1$. In this case, Hill \cite{Hill_regular}
claimed to give a construction of so-called \emph{split} regular representations,
that is, those for which the characteristic polynomial of $\bar{\beta}\in\mfg_{1}$
splits into linear factors over $\F_{q}$. Takase \cite{Takase-16}
recently pointed out a gap in the proof of Hill's result \cite[Theorem~4.6]{Hill_regular}
and proved that the construction exhausts at most the split regular
\emph{semisimple} representations, but does not exhaust all split
regular representations. We give a summary of Hill's construction
following \cite{Hill_regular}, point out two problems in the proof,
and state the correction/generalisation due to Takase.

We have an isomorphism $K^{l'}/K^{l}\cong\mfg_{1}$, and we can identify
any subgroup of $K^{l'}$ which contains $K^{l}$ with a sub-vectorspace
of $\mfg_{1}$. Define the alternating bilinear form 
\begin{align*}
B_{\beta}:K^{l'}/K^{l}\times K^{l'}/K^{l} & \longrightarrow\F_{q}\\
B_{\beta}((1+\pi^{l'}x)K^{l},(1+\pi^{l'}y)K^{l}) & =\tr(\bar{\beta}(\bar{x}\bar{y}-\bar{y}\bar{x})),
\end{align*}
where the bars denote reductions mod $\mfp$. The following is \cite[Lemma~4.5]{Hill_regular},
rewritten in our notation.
\begin{lem}
\label{lem:Hill-H}Suppose that $\beta\in\mfg_{r}$ is split regular.
Then there exists a subgroup $H_{\beta}$ of $K^{l'}$ such that $H_{\beta}$
contains $K^{l}$ and such that $H_{\beta}/K^{l}$ is a maximal isotropic
subspace of $K^{l'}/K^{l}$ with respect to the form $B_{\beta}$.
Moreover, $H_{\beta}$ is a normal subgroup of $G_{r}(\psi_{\beta})$.
\end{lem}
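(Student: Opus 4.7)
The plan is to exhibit $H_\beta/K^l$ as the image of a Borel subalgebra of $\mfg_1$ containing $\bar{\beta}$.

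First I would pin down the structure of the ambient group and form. Since $l\geq 2$ (as $r\geq 3$ is odd), the commutator relation $[K^{l'},K^{l'}]\subseteq K^{2l'}=K^{r-1}\subseteq K^l$ shows that $K^{l'}/K^l$ is abelian, so the map $x\mapsto 1+\varpi^{l'}x$ yields a $G_r$-equivariant isomorphism of additive groups $\mfg_1\iso K^{l'}/K^l$. Under this identification, $B_\beta$ becomes the form $(x,y)\mapsto\tr(\bar{\beta}[x,y])=\tr([\bar{\beta},x]y)$ on $\mfg_1$. Its radical consists of those $x$ with $[\bar{\beta},x]=0$, by non-degeneracy of the trace pairing, that is, it is $C_{\mfg_1}(\bar{\beta})$, which has dimension $N$ by regularity of $\bar{\beta}$. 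Consequently every maximal isotropic subspace of $\mfg_1$ must have dimension $N+\tfrac{1}{2}(N^2-N)=\tfrac{N(N+1)}{2}$.

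Next I would use the split hypothesis. Because the characteristic polynomial of $\bar{\beta}$ splits over $\F_q$, the Jordan form makes $\bar{\beta}$ $\GL_N(\F_q)$-conjugate to an upper triangular matrix; lifting this conjugation to $G_r$ and replacing $\beta$ accordingly (which simply conjugates the eventual $H_\beta$), we may assume $\bar{\beta}\in\bar{\mathfrak{b}}$, where $\bar{\mathfrak{b}}\subset\mfg_1$ is the subalgebra of upper triangular matrices, with corresponding Borel subgroup $\bar{B}=\bar{\mathfrak{b}}^\times$. Then $\bar{\mathfrak{b}}$ is $B_\beta$-isotropic: for $x,y\in\bar{\mathfrak{b}}$ the diagonal parts commute, so $[x,y]$ is strictly upper triangular, and multiplying by $\bar{\beta}\in\bar{\mathfrak{b}}$ keeps the product strictly upper triangular, hence of trace zero. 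Since $\dim\bar{\mathfrak{b}}=\tfrac{N(N+1)}{2}$, this isotropic subspace attains the maximal dimension found above.

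Finally, take $H_\beta\subset K^{l'}$ to be the preimage of $\bar{\mathfrak{b}}$ under $K^{l'}/K^l\iso\mfg_1$. It is a subgroup because $\bar{\mathfrak{b}}$ is additively closed and $K^{l'}/K^l$ is abelian. For normality in $G_r(\psi_\beta)=\mfo_r[\beta]^\times K^{l'}$ (by \eqref{eq:Stabiliser}), note that $K^{l'}$ acts trivially on the abelian quotient $K^{l'}/K^l$, while the action of $\mfo_r[\beta]^\times$ on $\mfg_1$ factors through $\F_q[\bar{\beta}]^\times\subseteq\bar{\mathfrak{b}}^\times=\bar{B}$ (since $\bar{\mathfrak{b}}$ is a subalgebra containing $\bar{\beta}$), and conjugation by $\bar{B}$ preserves $\bar{\mathfrak{b}}$. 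The main obstacle is really pinpointing why the split hypothesis is used: it is exactly what places $\bar{\beta}$ in some Borel subalgebra, making $\bar{\mathfrak{b}}$ simultaneously isotropic and of the dimension forced by the radical computation; once these dimensions match, the normality check falls out from $\F_q[\bar{\beta}]\subseteq\bar{\mathfrak{b}}$.
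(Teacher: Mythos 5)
Your proof follows essentially the same route as the paper's (Hill's): you take $H_\beta$ to be the preimage in $K^{l'}$ of the upper-triangular subalgebra $\bar{\mathfrak{b}}\subset\mfg_1$ after conjugating so that $\bar{\beta}$ is upper triangular, which is exactly the group $(B\cap K^{l'})K^l$ that the paper cites. The details you supply — the radical computation via non-degeneracy of the trace form, the dimension count $\tfrac{N(N+1)}{2}$, the isotropy of $\bar{\mathfrak{b}}$ via strict upper-triangularity of $\bar{\beta}[x,y]$, and the normality argument using $K^{l'}$ acting trivially on $K^{l'}/K^l$ together with $\F_q[\bar{\beta}]^\times\subseteq\bar{B}$ — are correct and fill out what the paper compresses into one sentence.
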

We recall that a subspace $U$ of a vector space $V$ with a bilinear
form $B(\,\cdot\,,\,\cdot\,)$ is called \emph{isotropic} (or sometimes
\emph{totally isotropic}) if $U\subseteq U^{\perp}$, that is, if
$B(U,U)=0$. Furthermore, $U$ is called \emph{maximal isotropic }(or
sometimes \emph{Lagrangian})\emph{ }if it is not properly contained
in any isotropic subspace, or equivalently, if $U=U^{\perp}$.

The proof of the above lemma consists of taking $H_{\beta}=(B\cap K^{l'})K^{l}$,
where $B$ is the upper-triangular subgroup of $G_{r}$, and showing
that it has the required properties, using the assumption that $\bar{\beta}$
is upper-triangular. Thus in Hill's construction, $H_{\beta}$ is
in fact independent of $\beta$.

Hill's main theorem \cite[Theorem~4.6]{Hill_regular} regarding the
construction of split regular representations for $r$ odd claims
that if $\beta\in\mfg_{r}$ is split regular, then for every $\pi\in\Irr(G_{r}\mid\psi_{\beta})$,
there exists a subgroup $H_{\beta}$ as in Lemma~\ref{lem:Hill-H}
and an extension $\tilde{\psi}_{\beta}$ of $\psi_{\beta}$ to $C_{G_{r}}(\beta)H_{\beta}$
such that 
\[
\pi=\Ind_{C_{G_{r}}(\beta)H_{\beta}}^{G_{r}}\tilde{\psi}_{\beta}.
\]
Unfortunately, Hill's proof of \cite[Theorem~4.6]{Hill_regular} suffers
from two problems. One is that a certain counting argument only goes
through when $\bar{\beta}$ is assumed to be semisimple (see \cite[Proposition~2.1.1]{Takase-16}),
so that Hill's construction does not exhaust the split regular representations.
The other problem is that, in the second paragraph of the proof, it
is asserted that a result of Brauer implies that the number of $C_{G_{r}}(\beta)H_{\beta}/N$-stable
characters of $H_{\beta}/N$ is equal to the number of $C_{G_{r}}(\beta)H_{\beta}/N$-stable
conjugacy classes of $H_{\beta}/N$, where $N=\Ker\psi_{\beta}$.
However, the quoted result of Brauer holds only for characters/conjugacy
classes fixed by a single element in a group, and does not necessarily
apply to the whole group $C_{G_{r}}(\beta)H_{\beta}/N$. We remark
that by results of Glauberman and Isaacs (see \cite[(13.24)]{Isaacs}
the appropriate generalisation of Brauer's result holds for coprime
group actions, but may fail otherwise. Since $p$ divides the orders
of both $C_{G_{r}}(\beta)H_{\beta}/N$ and $H_{\beta}/N$, the crucial
step in Hill's proof which asserts the existence of an extension of
$\psi_{\beta}$ to $C_{G_{r}}(\beta)H_{\beta}$ remains unclear.

In addition to the split regular representations, there are many regular
representations which are not split, in particular the \emph{cuspidal}
representations, that is, those where $\bar{\beta}$ has irreducible
characteristic polynomial. In \cite{Hill_semisimple_cuspidal} Hill
gave a construction of so-called \emph{strongly semisimple} representations,
that is, those for which $\bar{\beta}$ is semisimple and $\beta_{l'}\in\mfg_{l'}$
has additive Jordan decomposition $\beta_{l'}=s+n$, with $n$ in
the centre of the algebra $C_{\mfg_{l'}}(s)$.
\begin{example}
Consider the function $\iota:\F_{q}\rightarrow\mfo_{r}$, which is
the multiplicative section extended by setting $\iota(0)=0$. This
induces an injective function $\mfg_{1}\rightarrow$$\mfg_{r}$. An
element in $\mfg_{r}$ is called \emph{semisimple} if it is the image
of a semisimple element in $\mfg_{1}$ under the map $\mfg_{1}\rightarrow$$\mfg_{r}$.
Then any $\beta\in\mfg_{r}$ has a unique Jordan decomposition $\beta=s+n$,
where $s$ is semisimple, $n$ is nilpotent and $sn=ns$ (see \cite[Proposition~2.3]{Hill_semisimple_cuspidal}).
If $n=0$, then $\beta$ is strongly semisimple, so in particular,
there are strongly semisimple representations which are not regular.
The strongly semisimple representations include the cuspidal ones
(see \cite[Proposition~4.4]{Hill_semisimple_cuspidal}). 
\end{example}
Hill's construction of strongly semisimple representations for $r$
odd is summarised in the following (cf.~\cite[Proposition~3.6]{Hill_semisimple_cuspidal})
result:
\begin{thm}
\label{thm:Hill-strong-ss}Let $\pi\in\Irr(G_{r}\mid\psi_{\beta})$
be strongly semisimple. Then there exists a $\rho\in\Irr(K^{l'}\mid\psi_{\beta})$
and an extension $\tilde{\rho}$ of $\rho$ to $G_{r}(\psi_{\beta})$
such that 
\[
\pi=\Ind_{G_{r}(\psi_{\beta})}^{G_{r}}\tilde{\rho}.
\]
\end{thm}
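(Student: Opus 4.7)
The plan is to apply Clifford theory in two stages. First, by Theorem~\ref{thm:Clifford}\,\ref{Clifford2} applied to the normal subgroup $K^{l}\triangleleft G_{r}$, every $\pi\in\Irr(G_{r}\mid\psi_{\beta})$ has the form $\pi=\Ind_{G_{r}(\psi_{\beta})}^{G_{r}}\tilde\pi$ for a unique $\tilde\pi\in\Irr(G_{r}(\psi_{\beta})\mid\psi_{\beta})$. Applying Clifford's theorem now to the normal subgroup $K^{l'}\triangleleft G_{r}(\psi_{\beta})$, the restriction $\tilde\pi|_{K^{l'}}$ is a multiple of a $G_{r}(\psi_{\beta})$-orbit inside $\Irr(K^{l'}\mid\psi_{\beta})$. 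It therefore suffices to exhibit in this orbit a $\rho$ which is $G_{r}(\psi_{\beta})$-stable and admits an extension $\tilde\rho$ to $G_{r}(\psi_{\beta})$: by Theorem~\ref{thm:Clifford}\,\ref{Clifford3}, every element of $\Irr(G_{r}(\psi_{\beta})\mid\rho)$ is then a twist $\tilde\rho\otimes\chi$ with $\chi\in\Irr(G_{r}(\psi_{\beta})/K^{l'})$, and each such twist is itself an extension of $\rho$; hence $\tilde\pi$ is of the required form.

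For the Heisenberg description of $\Irr(K^{l'}\mid\psi_{\beta})$, the commutator relations from Section~\ref{sec:Clifford-theory} give $[K^{l'},K^{l}]\subseteq K^{r}=1$ and $[K^{l'},K^{l'}]\subseteq K^{2l'}\subseteq K^{l}$, so $K^{l'}$ is two-step nilpotent with $K^{l}$ central. A direct computation shows that, under the identification $V:=K^{l'}/K^{l}\cong\mfg_{1}$, the commutator map composed with $\psi_{\beta}$ corresponds to the alternating form $B_{\beta}$ introduced before Lemma~\ref{lem:Hill-H}, whose radical is $C_{\mfg_{1}}(\bar\beta)$, of dimension $N$ by regularity of $\bar\beta$. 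By Stone--von Neumann theory, each $\rho\in\Irr(K^{l'}\mid\psi_{\beta})$ is of the form $\Ind_{J}^{K^{l'}}\phi$, where $J\subseteq K^{l'}$ is the preimage of a maximal isotropic subspace $W\subseteq V$ (necessarily containing $C_{\mfg_{1}}(\bar\beta)$), and $\phi$ is a character of $J$ extending $\psi_{\beta}$.

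The decisive step is to exploit strong semisimplicity to choose $W$, $J$ and $\phi$ compatibly with the action of $C_{G_{r}}(\beta)$. Writing $\beta_{l'}=s+n$ with $s$ semisimple, $n$ nilpotent and $n$ central in $C_{\mfg_{l'}}(s)$, the decomposition of $\mfg_{1}$ into $\bar s$-isotypic blocks $V_{ij}$ (together with the diagonal part $C_{\mfg_{1}}(\bar\beta)$) is preserved by $C_{G_{r}}(\beta)$, and $B_{\beta}$ pairs $V_{ij}$ with $V_{ji}$ non-degenerately for $i\neq j$. After fixing any ordering of the eigenvalues, taking $W$ to be $C_{\mfg_{1}}(\bar\beta)$ together with the ``upper-triangular'' blocks $V_{ij}$, $i<j$, yields a $C_{G_{r}}(\beta)$-stable Lagrangian and hence a $C_{G_{r}}(\beta)$-stable $J$. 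The condition that $n$ is central in $C_{\mfg_{l'}}(s)$ (so that $n$ acts as a scalar on each $s$-isotypic piece) is then precisely what is needed to write down a $C_{G_{r}}(\beta)$-invariant character $\phi$ of $J$ extending $\psi_{\beta}$; one extends $\phi$ further to a character of $C_{G_{r}}(\beta)J$ and induces up to $G_{r}(\psi_{\beta})=C_{G_{r}}(\beta)K^{l'}$, obtaining a representation $\tilde\rho$ whose restriction to $K^{l'}$ is $\Ind_{J}^{K^{l'}}\phi=\rho$.

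The main obstacle is the extension step just sketched. On purely Clifford-theoretic grounds, a $G_{r}(\psi_{\beta})$-stable $\rho$ determines only a projective representation of $G_{r}(\psi_{\beta})/K^{l'}$, and the existence of $\tilde\rho$ is equivalent to the vanishing of the associated class in $H^{2}(G_{r}(\psi_{\beta})/K^{l'},\C^{\times})$. In general this obstruction need not vanish; this is precisely what blocks Hill's broader construction of all split regular representations in the non-semisimple case. The strong semisimplicity hypothesis circumvents the obstruction by furnishing the $C_{G_{r}}(\beta)$-equivariant polarisation $(J,\phi)$ above, from which $\tilde\rho$ is constructed explicitly.
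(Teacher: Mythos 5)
The paper itself does not prove this theorem: it is stated as a summary of \cite[Proposition~3.6]{Hill_semisimple_cuspidal}, with the author pointing out immediately afterwards that ``the only non-trivial part of this theorem is that $\rho$ has an extension'' and that Hill in fact shows \emph{every} $\rho\in\Irr(K^{l'}\mid\psi_{\beta})$ extends to $G_{r}(\psi_{\beta})$. Your first paragraph reproduces this reduction correctly (modulo the point below), and your Heisenberg analysis of $K^{l'}/K^{l}$ with the form $B_{\beta}$ matches what the paper sets out around Lemma~\ref{lem:Hill-H} and in Section~\ref{subsec:KOS-Characters}. So the skeleton of your plan is consistent with the paper's framing of the result.

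There are, however, two genuine gaps. First, a small one in the reduction: you write that ``it suffices to exhibit in this orbit a $\rho$'' which extends, but since each $\rho\in\Irr(K^{l'}\mid\psi_{\beta})$ is $G_{r}(\psi_{\beta})$-stable (cf.~Lemma~\ref{lem:Stab-sigma-beta}), the orbit of $\tilde\pi|_{K^{l'}}$ is a singleton determined by $\pi$. To get the conclusion for \emph{every} strongly semisimple $\pi\in\Irr(G_{r}\mid\psi_{\beta})$ you therefore need \emph{every} $\rho\in\Irr(K^{l'}\mid\psi_{\beta})$ to extend, which is exactly what the paper emphasises as the content of Hill's proposition; your construction must be made to work uniformly over all extensions $\theta'$ of $\psi_{\beta}$ to the radical $(C_{G_{r}}(\beta)\cap K^{l'})K^{l}$, not just for one choice. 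Second, and more seriously, the crucial claim that ``$n$ central in $C_{\mfg_{l'}}(s)$ is precisely what is needed to write down a $C_{G_{r}}(\beta)$-invariant character $\phi$ of $J$ extending $\psi_{\beta}$'' is asserted but not proved, and this is where all the work is. The naive formula $\phi(1+\varpi^{l'}x)=\psi(\varpi^{l'-r}\Tr(\beta x))$ fails to be multiplicative on $J$ in general: expanding $(1+\varpi^{l'}x)(1+\varpi^{l'}y)$ produces a term $\psi(\varpi^{-1}\Tr(\beta xy))$, which only vanishes if $\Tr(\bar\beta\bar x\bar y)=0$ for all $\bar x,\bar y\in W$, a strictly stronger condition than $W$ being isotropic for $B_{\beta}$. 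This is precisely the difficulty that forces Krakovski--Onn--Singla to use the $p>2$ logarithm (Lemma~\ref{lem:KOS-crucial}) and forces Stasinski--Stevens to pass to the parahoric filtration of $\Amin$; Hill handles it in the strongly semisimple case by a different argument, but your sketch does not supply one. Without an actual construction of an invariant $\phi$ (or equivalently a proof that the relevant class in $H^{2}(G_{r}(\psi_{\beta})/K^{l'},\C^{\times})$ vanishes), the proof plan does not close.
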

Note that the only non-trivial part of this theorem is that $\rho$
has an extension. In fact, it follows from the proof in \cite{Hill_semisimple_cuspidal}
that \emph{every} $\rho\in\Irr(K^{l'}\mid\psi_{\beta})$ extends to
$G_{r}(\psi_{\beta})$. Moreover, by Theorem~\ref{thm:Clifford}\,\ref{Clifford2},
distinct extensions of $\rho$ give rise to distinct representations~$\pi$.

The elements of $\Irr(K^{l'}\mid\psi_{\beta})$ are constructed in
\cite[Proposition~4.2\,(3)]{Hill_regular}, so that together with
the above theorem, this gives a complete construction of strongly
semisimple representations, up to a knowledge of the elements in $\Irr(G_{r}(\psi_{\beta})/K^{l'})\cong\Irr(C_{G_{l'}}(\beta_{l'}))$.
A version of Theorem~\ref{thm:Hill-strong-ss} holds also when $r$
is even; see \cite[Proposition~3.3]{Hill_semisimple_cuspidal}.

We see that out of the regular representations, Hill's constructions
cover at most those which are semisimple (i.e., where $\bar{\beta}$
is semisimple). The next step was taken recently by Takase, who proved
the following (see \cite[Theorem~3.2.2, 5.2.1 and 5.3.1]{Takase-16}):
\begin{thm}
\label{thm:Takase}Let $\pi\in\Irr(G_{r}\mid\psi_{\beta})$ be a regular
character and suppose that $\bar{\beta}$ satisfies at least one of
the following properties:

\begin{enumerate}
\item $\bar{\beta}$ has separable characteristic polynomial and $p>2$,
\item $\bar{\beta}$ has Jordan blocks of size at most $4$ and $p>7$.
\end{enumerate}
Then there exists a $\rho\in\Irr(K^{l'}\mid\psi_{\beta})$ and an
extension $\tilde{\rho}$ of $\rho$ to $G_{r}(\psi_{\beta})$ such
that $\pi=\Ind_{G_{r}(\psi_{\beta})}^{G_{r}}\tilde{\rho}$.
\end{thm}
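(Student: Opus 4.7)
The plan is to follow the Clifford-theoretic strategy that proves Theorem~\ref{thm:Hill-strong-ss}, and reduce the statement to two auxiliary claims about the normal inclusion $K^{l'}\triangleleft G_r(\psi_\beta)$: (A) every $\rho\in\Irr(K^{l'}\mid\psi_\beta)$ is fixed by $G_r(\psi_\beta)$, and (B) every such $\rho$ extends to an irreducible representation of $G_r(\psi_\beta)$. Given (A) and (B), Theorem~\ref{thm:Clifford}\,\ref{Clifford2} yields $\pi=\Ind_{G_r(\psi_\beta)}^{G_r}\pi'$ for a unique $\pi'\in\Irr(G_r(\psi_\beta)\mid\psi_\beta)$; Theorem~\ref{thm:Clifford}\,\ref{Clifford1} applied to $K^{l'}$, combined with (A), forces $\pi'|_{K^{l'}}$ to be isotypic of some type $\rho$; and (B) together with Theorem~\ref{thm:Clifford}\,\ref{Clifford3} identifies $\pi'$ with $\tilde\rho\chi$ for an extension $\tilde\rho$ of $\rho$ and some $\chi\in\Irr(G_r(\psi_\beta)/K^{l'})$. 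Absorbing $\chi$ into the extension gives $\pi=\Ind_{G_r(\psi_\beta)}^{G_r}\tilde\rho$ as required.

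For claim (A), I would exploit the two-step-nilpotent structure of $K^{l'}$. Since $[K^{l'},K^{l'}]\subseteq K^{2l'}\subseteq K^l$ for $r\geq3$, the subgroup $K^l$ is central in $K^{l'}$, and $V:=K^{l'}/K^l\cong\mfg_1$ is an $\F_q$-vector space on which the commutator pairing composed with $\psi_\beta$ recovers the alternating form $B_\beta$ of Section~\ref{sec:Hill-Takase}. A short calculation using cyclicity of the trace identifies the radical of $B_\beta$ with $C_{\mfg_1}(\bar\beta)$, and a standard Stone--von Neumann argument gives a bijection $\Irr(K^{l'}\mid\psi_\beta)\longleftrightarrow\Irr(C_{\mfg_1}(\bar\beta))$. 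Since $G_r(\psi_\beta)=C_{G_r}(\beta)K^{l'}$ and $K^{l'}$ acts on $\Irr(K^{l'}\mid\psi_\beta)$ trivially by inner automorphisms, the induced action factors through $C_{G_r}(\beta)\subseteq\mfo_r[\beta]^\times$. Regularity of $\bar\beta$ makes $C_{\mfg_1}(\bar\beta)=\F_q[\bar\beta]$ abelian, and the image of $C_{G_r}(\beta)$ sits inside $\F_q[\bar\beta]^\times$, hence acts trivially on $\F_q[\bar\beta]$; so every $\rho$ is fixed, proving (A).

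The hard part is (B), the construction of an explicit extension. Refining Hill's approach, I would choose a polarisation: a subgroup $H_\beta\subseteq K^{l'}$ containing $K^l$, whose image in $V$ is maximal isotropic for $B_\beta$, and which is normalised by $C_{G_r}(\beta)$. Because $H_\beta/K^l$ is isotropic, $\psi_\beta$ extends to a character of $H_\beta$; then, using abelianness of $C_{G_r}(\beta)$, one selects a character $\theta$ of $C_{G_r}(\beta)$ agreeing with the chosen extension on $C_{G_r}(\beta)\cap H_\beta$ and glues to obtain a character of $C_{G_r}(\beta)H_\beta$; finally one induces to $C_{G_r}(\beta)K^{l'}=G_r(\psi_\beta)$ and verifies that the result is irreducible, at which point it is the sought extension. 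The obstruction to irreducibility lives in $H^2(G_r(\psi_\beta)/K^{l'},\C^\times)$, and the two hypotheses on $\bar\beta$ are used precisely to kill this obstruction by constructing $H_\beta$ cleverly enough.

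Where the two hypotheses intervene is in the compatibility of $H_\beta$ with the block structure of $\bar\beta$ and in the final irreducibility check. Under hypothesis (i), the \'{e}tale $\mfo_r$-algebra $\mfo_r[\beta]$ decomposes as a product of local rings indexed by the irreducible factors of the characteristic polynomial of $\bar\beta$, and $H_\beta$ can be assembled blockwise from the strongly semisimple construction already handled in Theorem~\ref{thm:Hill-strong-ss}; the condition $p>2$ enters to ensure non-degeneracy of certain symmetric forms appearing when matching polarisations across blocks. Under hypothesis (ii), one instead works in a fixed Jordan basis, performing an explicit case analysis across Jordan blocks of size at most $4$; the condition $p>7$ then enters through numerical identities that would fail in small characteristic. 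The principal obstacle throughout, and the step where Takase's genuine new input lies, is the Mackey-type intertwining calculation proving irreducibility of the induction from $C_{G_r}(\beta)H_\beta$ up to $G_r(\psi_\beta)$; this resists a uniform treatment and is what forces the stated restrictions on $\bar\beta$.
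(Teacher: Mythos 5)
Your Clifford-theoretic skeleton is the right one and coincides with the paper's summary: the crux is indeed to show that every $\rho\in\Irr(K^{l'}\mid\psi_{\beta})$ extends to $G_{r}(\psi_{\beta})$ (your claim (B)), after which Theorem~\ref{thm:Clifford}\,\ref{Clifford2} and \ref{Clifford3} finish the argument exactly as you describe. Your claim (A) is also correct; it is proved in the paper (in the guise of Lemma~\ref{lem:Stab-sigma-beta}) by identifying $\rho$ with its uniquely determined extension $\theta'$ of $\theta$ to the radical, and the regularity of $\beta$ enters there via $C_{G_{r}}(\beta)=\mfo_{r}[\beta]^{\times}$.

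Where you go off course is in your account of how (B) is proved and where the hypotheses on $\bar\beta$ intervene. The paper is explicit that the obstruction to the extension is the Schur multiplier $H^{2}(\F_{q}[\bar{\beta}]^{\times},\C^{\times})$ (not $H^{2}(G_{r}(\psi_{\beta})/K^{l'},\C^{\times})$ as you write — the quotient $G_{r}(\psi_{\beta})/K^{l'}\cong C_{G_{l'}}(\beta_{l'})$ is strictly larger, and there is a non-trivial reduction to the residual group $\F_{q}[\bar{\beta}]^{\times}$). Takase's input is a \emph{cohomological vanishing} argument: when $\bar{\beta}$ has irreducible characteristic polynomial, $\F_{q}[\bar{\beta}]^{\times}$ is cyclic and the Schur multiplier vanishes for free; for separable $\bar{\beta}$ with $p>2$ the argument reduces to the irreducible case; for the second hypothesis the vanishing of the relevant cocycle is checked by explicit computation. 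Your description instead locates the hard step in a ``Mackey-type intertwining calculation proving irreducibility of the induction from $C_{G_{r}}(\beta)H_{\beta}$'', but once an extension exists the irreducibility of the induced representations is automatic from Clifford theory — that is not where the work is. You also mischaracterise the role of $p>2$ (it is used for the reduction to the irreducible case, not for ``non-degeneracy of symmetric forms when matching polarisations'').

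More seriously, the concrete construction you propose — pick a polarisation $H_{\beta}$ normalised by $C_{G_{r}}(\beta)$, extend $\psi_{\beta}$ to $H_{\beta}$, glue with a character of $C_{G_{r}}(\beta)$, then induce and ``verify irreducibility'' — is essentially Hill's construction from \cite{Hill_regular}, and the paper explains precisely why that route is incomplete: the step asserting that $\psi_{\beta}$ extends to $C_{G_{r}}(\beta)H_{\beta}$ rests on a misapplication of a theorem of Brauer, and the existence of such an $H_{\beta}$ normalised by all of $C_{G_{r}}(\beta)$ (not just by a $p$-Sylow subgroup) is not known in general. Your proposal would therefore reintroduce exactly the gap in Hill's Theorem~4.6 that Takase identified, rather than close it.
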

Just as for Hill's theorem on strongly semisimple representations
above, the difficulty in Takase's proof lies in showing that every
$\rho\in\Irr(K^{l'}\mid\psi_{\beta})$ extends to $G_{r}(\psi_{\beta})$.
The existence of an extension follows from the vanishing of the cohomology
group $H^{2}(\F_{q}[\bar{\beta}]^{\times},\mathbb{C}^{\times})$,
the so-called Schur multiplier. When $\bar{\beta}$ has irreducible
characteristic polynomial, $\F_{q}[\bar{\beta}]$ is a finite field,
so $\F_{q}[\bar{\beta}]^{\times}$ is cyclic. In this case it is well
known that $H^{2}(\F_{q}[\bar{\beta}]^{\times},\mathbb{C}^{\times})$
is trivial. For $p>2$ Takase reduces the separable case to the irreducible,
and thus proves Theorem~\ref{thm:Takase} when $\bar{\beta}$ satisfies
the first condition; cf.~\cite[Theorem~4.3.2]{Takase-16}. The existence
of an extension when $\bar{\beta}$ satisfies the second condition
is proved in \cite{Takase-16} by explicit computation of the relevant
cocycles.

These results led Takase to conjecture that a certain element in the
Schur multiplier is always trivial for $p$ large enough; see \cite[Conjecture~4.6.5]{Takase-16}.

\section{\label{sec:KOS}The construction of Krakovski, Onn and Singla}

We will now describe the construction of regular representations of
$G_{r}$, $r$ odd, due to Krakovski, Onn and Singla \cite{KOS}.
This gives a construction of all the regular representations, provided
the residue characteristic $p$ of $\mfo$ is \emph{odd}. Furthermore,
\cite{KOS} also contains constructions and enumeration of all the
regular representations of $\SL_{N}(\mfo_{r})$ when $p>N$, and of
the unitary groups $\mathrm{SU}_{N}(\mfo_{r})$ and $\mathrm{GU}_{N}(\mfo_{r})$
with respect to a quadratic unramified extension of $\mfo$ (with
some restrictions on $p$). The construction in \cite{KOS} was inspired
by a construction of Jaikin-Zapirain for $\SL_{2}(\mfo_{r})$, $p>2$;
see \cite[Section~7]{Jaikin-zeta}. We continue to assume that $r=l+l'$
is odd. The following result is \cite[Theorem~3.1]{KOS}, which is
a more detailed statement of \cite[Theorem~A]{KOS}. We state this
only for $\GL_{N}$, in a form slightly adapted to our present notation.
\begin{thm}
\label{thm:KOS}Assume that $\mfo$ has residue characteristic $p>2$.
Let $\sigma\in\Irr(K^{l'}\mid\psi_{\beta})$ with $\beta$ regular.
Then $\sigma$ has an extension $\tilde{\sigma}$ to $G_{r}(\psi_{\beta})$,
and thus any $\pi\in\Irr(G_{r}\mid\psi_{\beta})$ is of the form $\pi=\Ind_{G_{r}(\psi_{\beta})}^{G_{r}}\tilde{\sigma}$,
for some extension $\tilde{\sigma}$. 
\end{thm}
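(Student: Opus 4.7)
The plan is to realise each $\sigma\in\Irr(K^{l'}\mid\psi_{\beta})$ as a Heisenberg representation attached to $B_{\beta}$, and then to extend it to $G_{r}(\psi_{\beta})$ by invoking a Weil representation of the induced symplectic action of $\mfo_{r}[\beta]^{\times}$; the hypothesis $p>2$ enters precisely in producing a genuine (rather than merely projective) lift of this action.

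First I would put a Heisenberg structure on $K^{l'}$. Since $r$ is odd, the commutator relation gives $[K^{l'},K^{l'}]\subseteq K^{l}$, so $K^{l'}/K^{l}\cong\mfg_{1}$ is abelian and the commutator pairing composed with $\psi_{\beta}$ is the form $B_{\beta}$. Cyclicity of the trace shows that the radical of $B_{\beta}$ equals $C_{\mfg_{1}}(\bar{\beta})$, which has dimension $N$ by regularity of $\bar{\beta}$; its preimage in $K^{l'}$ is $J:=(C_{G_{r}}(\beta)\cap K^{l'})K^{l}$, which is abelian because $K^{l}$ is central in $K^{l'}$ and $C_{G_{r}}(\beta)$ is abelian. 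Since $J/K^{l}$ sits inside the radical of $B_{\beta}$, $\psi_{\beta}$ admits one-dimensional extensions $\hat{\psi}_{\beta}$ to $J$, and each such choice makes $\cH:=K^{l'}/\Ker\hat{\psi}_{\beta}$ into a Heisenberg-type group with cyclic centre $J/\Ker\hat{\psi}_{\beta}$ and symplectic quotient $V=K^{l'}/J$ of even dimension $N(N-1)$.

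By Stone--von Neumann there is, up to isomorphism, a unique irreducible representation of $\cH$ with central character induced by $\hat{\psi}_{\beta}$, and inflation to $K^{l'}$ gives an element $\sigma_{\hat{\psi}_{\beta}}\in\Irr(K^{l'}\mid\psi_{\beta})$; every element of $\Irr(K^{l'}\mid\psi_{\beta})$ arises in this way. Since $\mfo_{r}[\beta]^{\times}$ is abelian and contains $C_{G_{r}}(\beta)\cap K^{l'}$, conjugation by $\mfo_{r}[\beta]^{\times}$ acts trivially on $C_{G_{r}}(\beta)\cap K^{l'}$ and preserves $\psi_{\beta}$ by definition of the stabiliser, hence preserves $\hat{\psi}_{\beta}$; in particular $G_{r}(\psi_{\beta})$ stabilises $\sigma_{\hat{\psi}_{\beta}}$. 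Stone--von Neumann uniqueness then yields a projective representation of $G_{r}(\psi_{\beta})/K^{l'}$ on the space of $\sigma_{\hat{\psi}_{\beta}}$, and the existence of an honest extension $\tilde{\sigma}$ is equivalent to the vanishing of the corresponding class in $H^{2}\bigl(G_{r}(\psi_{\beta})/K^{l'},\bbC^{\times}\bigr)$.

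The main obstacle is this vanishing. The conjugation action of $G_{r}(\psi_{\beta})/K^{l'}$ on $\cH$ descends to symplectic automorphisms of $V$, and so factors through a homomorphism $G_{r}(\psi_{\beta})/K^{l'}\to\mathrm{Sp}(V)$. For $q$ odd the classical Weil representation of $\mathrm{Sp}(V)$ on the space of the Heisenberg representation is a \emph{genuine} (not merely projective) representation, and pulling it back produces the required lift of the projective representation of $G_{r}(\psi_{\beta})/K^{l'}$; the abelianness of $\mfo_{r}[\beta]^{\times}$ handles the residual piece lying in the kernel of the action on $V$. This is the generalisation to arbitrary $N$, carried out in \cite{KOS}, of the argument of Jaikin-Zapirain for $\SL_{2}(\mfo_{r})$ in \cite[Section~7]{Jaikin-zeta}, and the absence of such a canonical splitting in residue characteristic two is precisely what restricts Theorem~\ref{thm:KOS} to $p>2$. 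Once $\tilde{\sigma}$ exists, the description $\pi=\Ind_{G_{r}(\psi_{\beta})}^{G_{r}}\tilde{\sigma}$ follows immediately from Theorem~\ref{thm:Clifford}\,\ref{Clifford2} together with \ref{Clifford3}.
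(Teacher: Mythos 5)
Your proposal takes a genuinely different route from the paper. The paper (following Krakovski--Onn--Singla, and ultimately Jaikin-Zapirain) never invokes the Weil representation of the full symplectic group: it first reduces, via \cite[Corollary~11.31]{Isaacs}, to extending $\sigma$ to the $p$-Sylow subgroup $P_\beta K^{l'}$ of $G_r(\psi_\beta)$; it then produces a $P_\beta$-stable Lagrangian $J_\beta$ via Howe's lemma and writes down an \emph{explicit} extension of $\psi_\beta$ to $J_\beta$ by the $\log^*$ formula of Lemma~\ref{lem:KOS-crucial} (this is exactly where $p>2$ enters, through the $\tfrac{1}{2}$ in the truncated exponential); abelianness of $P_\beta$ then gives an extension to $P_\beta J_\beta$, and a dimension count shows that inducing up to $P_\beta K^{l'}$ yields an extension of $\sigma$. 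You instead argue that the projective-representation obstruction in $H^2\bigl(G_r(\psi_\beta)/K^{l'},\C^\times\bigr)$ vanishes because it can be traced to the Weil cocycle on $\mathrm{Sp}(V)$, which is trivial for odd $q$. This is a recognisable alternative philosophy (closer to G\'erardin), and the Heisenberg/Stone--von Neumann set-up in your first two paragraphs is correct, including the identification of the radical of $B_\beta$ with $C_{\mfg_1}(\bar\beta)$ and of its preimage with $(C_{G_r}(\beta)\cap K^{l'})K^l$.

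However, the clause ``the abelianness of $\mfo_r[\beta]^\times$ handles the residual piece lying in the kernel of the action on $V$'' is not a proof, and it conceals the actual crux. You need to show that the cohomology class of the projective representation of $Q:=G_r(\psi_\beta)/K^{l'}$ is pulled back along $\phi\colon Q\to\mathrm{Sp}(V)$, but $\phi$ is far from injective. The correct argument is to observe that, since $\hat\psi_\beta$ is $G_r(\psi_\beta)$-stable, $G_r(\psi_\beta)$ maps into $\mathrm{Aut}_Z(\cH)$ (automorphisms fixing $Z(\cH)$ pointwise), that $K^{l'}$ maps onto $\mathrm{Inn}(\cH)$, and that because the pairing on $V$ is nondegenerate one has $\mathrm{Out}_Z(\cH)\cong\mathrm{Sp}(V)$; only then does one know that the cocycle class is pulled back from $\mathrm{Sp}(V)$ (inner automorphisms contribute coboundaries, since they lift canonically through $\sigma$). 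You would also need to address the fact that $Z(\cH)=J/\Ker\hat\psi_\beta$ is cyclic of $p$-power order that may exceed $p$ (the commutator form lands only in the order-$p$ subgroup), so the relevant Heisenberg group is not literally the standard $H(V)$ for which the classical genuine Weil representation over $\F_q$ is usually stated; this is fixable, but must be said. Without these steps your argument is not complete, whereas the paper's Howe's-lemma-plus-$\log^*$ route sidesteps all of them.
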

In particular, this proves a strengthened form of Takase's conjecture
mentioned above, namely for all $p>2$ (another proof of this, for
all $p$, follows from the construction of Stasinski and Stevens).
We elaborate on the proof of \cite[Theorem~3.1]{KOS} in order to
provide some of the details of the construction. As we have already
remarked in previous sections, the main difficulty is to show that
every $\sigma\in\Irr(K^{l'}\mid\psi_{\beta})$ extends to $G_{r}(\psi_{\beta})$.
We will mainly formulate things in our present notation, but use the
notation of \cite{KOS} where possible.

\subsection{\label{subsec:KOS-Characters}Characters}

Assume that $p>2$. For $i$ such that $r/2\leq i<r$, the exponential
map $\exp:x\mapsto1+x$ gives an isomorphism $\mfp^{i}\mfg_{r}\rightarrow K^{i}$
(we already saw this in Section~\ref{sec:Clifford-theory}, and it
works for any $p$). Moreover, when $p>2$ and $r/3\leq i<r/2$, the
exponential map $\exp:x\mapsto1+x+\frac{1}{2}x^{2}$ gives a bijection
$\mfp^{i}\mfg_{r}\rightarrow K^{i}$, which is however not an isomorphism
in general. As usual, the inverse of this exponential map is given
by the logarithm $\log:1+x\mapsto x-\frac{1}{2}x^{2}$. Every $\beta\in\mfg_{r}$
defines a character
\[
\phi_{\beta}:\mfg_{r}\longrightarrow\C^{\times},\quad\text{where }\phi_{\beta}(x)=\psi(\varpi^{-r}\tr(\beta x)).
\]
The corresponding map $\beta\mapsto\phi_{\beta}$ is an isomorphism.
Any $\theta\in\Irr(\mfp^{l}\mfg_{r})$ can be pre-composed with the
logarithm map $\log:K^{l}\rightarrow\mfp^{l}\mfg_{r}$, $1+x\mapsto x$,
to give a character $\log^{*}\theta:=\theta\circ\log\in\Irr(K^{l}$),
such that, for $1+x\in K^{l}$, 
\[
(\log^{*}\theta)(1+x)=\phi_{\beta}(x),
\]
where $\beta$ is determined by $\theta$. Note that $\log^{*}\theta=\psi_{\beta}$,
where $\psi_{\beta}$ is as in (\ref{eq:character-psi-b}). In particular,
$\phi_{\beta}$ restricts to $\theta$ on $\mfp^{l}\mfg_{r}$, but
for a given $\theta$, there is more than one $\beta$ such that $\phi_{\beta}$
restricts to $\theta$, since the restriction only depends on $\beta$
mod~$\mfp^{l}$.

A crucial step in \cite{KOS} (due to Jaikin-Zapirain for $\SL_{2}$),
is to extend the above definition of $\log^{*}\theta$, in order to
give a useful description of certain characters on any subgroup $K^{l}\subseteq J_{\beta}\subseteq K^{l'}$
such that $J_{\beta}/K^{l}$ is a maximal isotropic subspace for the
form $B_{\beta}$ defined in Section~\ref{sec:Hill-Takase}. This
is the motivation behind \cite[Lemma~3.2]{KOS}, and the essential
reason for the assumption $p>2$. The following result gives a summary
of the key facts involved (see \cite[Lemma~3.2 and Section~3.2]{KOS}).
\begin{lem}
\label{lem:KOS-crucial}Let $J_{\beta}$ be such that $J_{\beta}/K^{l}$
is a maximal isotropic subspace. Let $\theta''$ be the restriction
of a character $\phi_{\beta}\in\Irr(\mfg_{r})$ to $\mfp^{l'}\mfg_{r}$.
Then the function $\log^{*}\theta'':K^{l'}\rightarrow\C^{\times}$
defines a multiplicative character when restricted to $J_{\beta}$. 
\end{lem}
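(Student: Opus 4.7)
The plan is to reduce the multiplicativity check to a short truncated Baker--Campbell--Hausdorff calculation that collapses, modulo $\mfp^r\mfg_r$, to a single commutator term $\tfrac12[x,y]$, and then to recognise this term as a unit multiple of the alternating form $B_\beta$, which vanishes on the isotropic subspace $J_\beta/K^l$. The two hypotheses that make everything work are $p>2$ (so that $\tfrac12$ makes sense and $\log,\exp$ are realised by the two-term polynomials used in \S\ref{subsec:KOS-Characters}) and $3l'\geq r$, which holds since $r$ is odd with $l'=(r-1)/2$; this latter inequality forces every cubic or higher monomial in $\mfp^{l'}\mfg_r$ to vanish in $\mfg_r$.

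First I would compute $\log(uv)-\log u-\log v$ modulo $\mfp^r\mfg_r$ for $u=1+x$ and $v=1+y$ in $K^{l'}$, where $x,y\in\mfp^{l'}\mfg_r$. Writing $uv=1+z$ with $z=x+y+xy$, every monomial in $x,y$ of total degree at least $3$ lies in $\mfp^{3l'}\mfg_r=0$, so $z^2\equiv x^2+xy+yx+y^2$, and hence
\[
\log(uv)\;\equiv\;(x-\tfrac12 x^2)+(y-\tfrac12 y^2)+\tfrac12(xy-yx)\;=\;\log u+\log v+\tfrac12[x,y]\pmod{\mfp^r\mfg_r}.
\]

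Next I would apply $\theta''$, which is an additive character of $\mfp^{l'}\mfg_r$, to obtain
\[
(\log^*\theta'')(uv)=(\log^*\theta'')(u)\,(\log^*\theta'')(v)\cdot \psi\!\bigl(\tfrac12\varpi^{-r}\tr(\beta[x,y])\bigr).
\]
To identify the error factor with $B_\beta$, substitute $x=\varpi^{l'}\tilde x$ and $y=\varpi^{l'}\tilde y$; using $2l'=r-1$, the argument of $\psi$ becomes $\tfrac12\varpi^{-1}\tr(\beta[\tilde x,\tilde y])$. Since $\psi$ is trivial on $\mfo$ and since $p\neq 2$ makes $\tfrac12$ a unit there, this character depends only on the residue $\tr(\bar\beta[\bar{\tilde x},\bar{\tilde y}])\in\F_q$, which is exactly $B_\beta(uK^l,vK^l)$. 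When $u,v\in J_\beta$ this vanishes by isotropy of $J_\beta/K^l$, so the error factor is $1$ and $(\log^*\theta'')|_{J_\beta}$ is multiplicative.

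The main obstacle, and the step most sensitive to hypotheses, is the combined BCH computation of the first step and its translation into $B_\beta$ in the second step: both arguments break down when $p=2$, since $\tfrac12$ is no longer available and the two-term $\exp/\log$ polynomials no longer give mutually inverse bijections $\mfp^{l'}\mfg_r\leftrightarrow K^{l'}$. This is precisely why the whole construction of \cite{KOS} is restricted to odd residue characteristic, and note that the maximality of $J_\beta/K^l$ plays no role in the proof of this lemma itself (only isotropy is used); maximality is needed elsewhere to ensure that the characters one extends from $K^l$ to $J_\beta$ induce irreducibly.
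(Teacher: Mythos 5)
Your proposal is correct and follows essentially the same approach as the paper: both compute the multiplicativity defect $\log(uv)-\log u-\log v\equiv\tfrac12[x,y]\pmod{\mfp^{r}\mfg_{r}}$, apply the additive character $\theta''$, and recognise the resulting factor as $\psi$ of a unit multiple of $B_{\beta}$, which vanishes by isotropy of $J_{\beta}/K^{l}$. The only cosmetic difference is that the paper organises the log computation by factoring out a central square root $g^{1/2}$ of the commutator and using additivity of $\log$ on products with a central element, whereas you expand $\log(uv)$ directly from the two-term polynomial; the algebraic content, the reliance on $p>2$, and the use of $2l'=r-1$ are the same, and your observation that only isotropy (not maximality) is used in the lemma is also accurate.
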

\begin{proof}
Let $1+\varpi^{l'}x$ and $1+\varpi^{l'}y$ be elements in $J_{\beta}$.
Direct computation yields the commutator
\[
g:=[(1+\varpi^{l'}x),(1+\varpi^{l'}y)]=1+\varpi^{2l'}(xy-yx).
\]
Since we are assuming that $p>2$, we have a unique square root $g^{1/2}=1+\frac{1}{2}\varpi^{2l'}(xy-yx)$.
In particular, since $2l'=r-1$, $g^{1/2}$ is in the centre of $K^{l'}$.
Thus,
\begin{align*}
\log((1+\varpi^{l'}x)(1+\varpi^{l'}y)) & =\log((1+\varpi^{l'}x)(1+\varpi^{l'}y)g{}^{-1/2}g^{1/2})\\
 & =\log((1+\varpi^{l'}x)(1+\varpi^{l'}y)g{}^{-1/2})+\log(g^{1/2})\\
 & =\log(1+\varpi^{l'}(x+y)+\frac{1}{2}\varpi^{2l'}(xy+yx))+\log(g^{1/2})\\
 & =\log(1+\varpi^{l'}x)+\log(1+\varpi^{l'}y)+\log(g^{1/2}),
\end{align*}
where the second equality follows from the fact that $g^{1/2}$ is
central. Applying $\theta''$, we get
\begin{align*}
\theta''(\log((1+\varpi^{l'}x)(1+\varpi^{l'}y))) & =\theta''(\log(1+\varpi^{l'}x))+\theta''(\log(1+\varpi^{l'}y))\\
 & \phantom{{{}={}}}+\theta''(\log(g^{1/2}))\\
 & =\theta''(\log(1+\varpi^{l'}x))+\theta''(\log(1+\varpi^{l'}y))\\
 & \phantom{{{}={}}}+\psi(\frac{1}{2}\varpi^{-1}\tr(\beta(xy-yx)))\\
 & =\theta''(\log(1+\varpi^{l'}x))+\theta''(\log(1+\varpi^{l'}y)),
\end{align*}
where the last equality follows from the fact that $\tr(\bar{\beta}(\bar{x}\bar{y}-\bar{y}\bar{x}))=B_{\beta}(1+\varpi^{l'}x,1+\varpi^{l'}y)=0$,
since $J_{\beta}/K^{l}$ is isotropic.
\end{proof}
The crucial corollary of this lemma is that any $\log^{*}\theta\in\Irr(K^{l})$
extends to $J_{\beta}$ \emph{by the same formula}, that is, $\log^{*}\theta''=\log^{*}\phi_{\beta}$.
We emphasise that the key is not just that $\log^{*}\theta$ has an
extension to $J_{\beta}$ (this is true for any $p$, by \cite[Proposition~4.2]{Hill_regular}),
but that there is an extension given by an explicit formula which
makes it evident that the extension is stabilised by any $g\in C_{G_{r}}(\beta)$
which normalises $J_{\beta}$. As we will explain below, the $p$-Sylow
subgroup $P_{\beta}$ of $C_{G}(\beta)$ normalises $J_{\beta}$,
so the extension $\log^{*}\theta''$ of $\log^{*}\theta$ to $J_{\beta}$
is stabilised by $P_{\beta}$. Note that it is not known whether all
of $C_{G}(\beta)$ normalises $J_{\beta}$, in general.

We now describe the representations of the non-abelian group $K^{l'}$,
following Hill \cite[Proposition~4.2]{Hill_regular}. It is easy to
check that the radical of the bilinear form $B_{\beta}$ introduced
in Section~(\ref{sec:Hill-Takase}), is $(C_{G_{r}}(\beta)\cap K^{l'})K^{l}/K^{l}$.
There is then a subgroup $K^{l}\subseteq J_{\beta}\subseteq K^{l'}$
such that $J_{\beta}/K^{l}$ is a maximal isotropic subspace. The
radical and maximal isotropic subspace correspond to two subspaces
of $\M_{N}(\F_{q})\cong K^{l'}/K^{l}$, and we let
\[
\mfr_{\beta}\quad\text{and}\quad\mfj_{\beta}
\]
denote the inverse images in $\mfp^{l'}\mfg_{r}$ of these two subspaces,
respectively, under the map $\mfp^{l'}\mfg_{r}\rightarrow\M_{N}(\F_{q})$,
$\varpi^{l'}x\mapsto\bar{x}$. Clearly $\mfr_{\beta}$ and $\mfj_{\beta}$
only depend on $\bar{\beta}\in\mfg_{r}$. Let $\theta\in\Irr(\mfp^{l}\mfg_{r})$,
and let $\theta'$ be an extension of $\theta$ to $\mfr_{\beta}$
(here we are just talking about characters of abelian groups). Then
$\theta'$ determines a unique irreducible representation of $K^{l'}$,
which arises as follows. Let $\theta''$ be an extension of $\theta'$
to $\mfj_{\beta}$. Then $\log^{*}\theta''$ is a character of the
group $J_{\beta}$ thanks to Lemma~\ref{lem:KOS-crucial}, and 
\[
\Ind_{J_{\beta}}^{K^{l'}}(\log^{*}\theta'')
\]
can be shown to be irreducible. In fact, it is the unique element
in $\Irr(K^{l'}\mid\log^{*}\theta')$.

\subsection{Construction of representations}

From now on, let $\theta\in\Irr(\mfp^{l}\mfg_{r})$ be a character
that corresponds to a regular element, that is $\log^{*}\theta=\psi_{\beta}$
, where $\beta\in\mfg_{r}$ is regular (recall that $\psi_{\beta}$
only depends on the coset $\beta+\mfp^{l'}\mfg_{r}$). 
\begin{lem}
\label{lem:Stab-sigma-beta}Let $\sigma\in\Irr(K^{l'}\mid\log^{*}\theta)$.
Then $G_{r}(\sigma)=G_{r}(\psi_{\beta})$.
\end{lem}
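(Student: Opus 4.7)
The plan is to prove the two inclusions $G_{r}(\sigma)\subseteq G_{r}(\psi_{\beta})$ and $G_{r}(\psi_{\beta})\subseteq G_{r}(\sigma)$ separately. The first uses only elementary Clifford theory, while the second relies on the explicit parametrisation of $\Irr(K^{l'}\mid\psi_{\beta})$ introduced in Subsection~\ref{subsec:KOS-Characters}.

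For $G_{r}(\sigma)\subseteq G_{r}(\psi_{\beta})$, I would apply Theorem~\ref{thm:Clifford}\,\ref{Clifford1} to the pair $K^{l}\lhd K^{l'}$. By hypothesis $\sigma|_{K^{l}}$ contains $\log^{*}\theta=\psi_{\beta}$, so $\sigma|_{K^{l}}=e\sum_{\eta\in\Omega}\eta$, where $\Omega$ is the $K^{l'}$-orbit of $\psi_{\beta}$. Since $K^{l}$ is normal in $G_{r}$, any $g\in G_{r}(\sigma)$ permutes the constituents of $\sigma|_{K^{l}}$ and hence the orbit $\Omega$. Thus ${}^{g}\psi_{\beta}={}^{k}\psi_{\beta}$ for some $k\in K^{l'}$, giving $g\in K^{l'}G_{r}(\psi_{\beta})=G_{r}(\psi_{\beta})$ by (\ref{eq:Stabiliser}), since $K^{l'}\subseteq G_{r}(\psi_{\beta})$.

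For the reverse inclusion, (\ref{eq:Stabiliser}) reduces the task to showing that both $K^{l'}$ and $C_{G_{r}}(\beta)$ lie in $G_{r}(\sigma)$. The former is automatic, as $\sigma$ is an irreducible representation of $K^{l'}$ and is thus invariant under conjugation by its own elements. For $c\in C_{G_{r}}(\beta)=\mfo_{r}[\beta]^{\times}$, I would use that $\sigma$ is the unique element of $\Irr(K^{l'}\mid\log^{*}\theta')$ for some extension $\theta'$ of $\theta$ to $\mfr_{\beta}$. Since $c$ commutes with $\beta$, it normalises $\mfr_{\beta}=\varpi^{l'}\mfo_{r}[\beta]+\mfp^{l}\mfg_{r}$, and conjugation commutes with the polynomial map $\log$, so $\log^{*}({}^{c}\theta')={}^{c}(\log^{*}\theta')$ as characters of $\exp(\mfr_{\beta})$. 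Consequently ${}^{c}\sigma$ is the unique element of $\Irr(K^{l'}\mid\log^{*}({}^{c}\theta'))$, and ${}^{c}\sigma=\sigma$ will follow from ${}^{c}\theta'=\theta'$.

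The main step is thus verifying the invariance ${}^{c}\theta'=\theta'$, which is where the regularity of $\beta$ really enters. Writing any $a\in\mfr_{\beta}$ as $a=\varpi^{l'}s+b$ with $s\in\mfo_{r}[\beta]$ and $b\in\mfp^{l}\mfg_{r}$, the identity $cs=sc$ reduces the computation to $cac^{-1}-a=cbc^{-1}-b\in\mfp^{l}\mfg_{r}$. Since $\theta'|_{\mfp^{l}\mfg_{r}}=\theta=\phi_{\beta}$, we then obtain
\[
\theta'(cac^{-1})=\theta'(a)\cdot\psi\bigl(\varpi^{-r}\tr(\beta(cbc^{-1}-b))\bigr)=\theta'(a),
\]
the second equality following from cyclicity of the trace together with $c^{-1}\beta c=\beta$. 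The crux of the argument---and the only non-formal point---is precisely this invariance, which hinges on the two consequences of regularity $C_{G_{r}}(\beta)=\mfo_{r}[\beta]^{\times}$ and $\mfr_{\beta}=\varpi^{l'}\mfo_{r}[\beta]+\mfp^{l}\mfg_{r}$; without them, an element of $C_{G_{r}}(\beta)$ need not commute with a lift of an element of $C_{\mfg_{1}}(\bar{\beta})$.
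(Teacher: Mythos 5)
Your proof is correct, and both inclusions are established soundly. The first inclusion is essentially the same as the paper's (though you phrase it via Clifford's theorem for $K^{l}\lhd K^{l'}$; since $K^{l'}\subseteq G_{r}(\psi_{\beta})$, the orbit $\Omega$ is in fact the singleton $\{\psi_{\beta}\}$, so $\sigma|_{K^l}$ is a multiple of $\psi_{\beta}$ and $G_{r}(\sigma)$ stabilises it directly). For the reverse inclusion you take a genuinely different route: the paper chooses a lift $\beta'\in\mfg_{r}$ such that $\phi_{\beta'}\in\Irr(\mfg_{r})$ extends $\theta'$, so that $C_{G_{r}}(\beta')$ manifestly stabilises $\phi_{\beta'}$, hence $\theta'$, hence $\sigma$, and then notes $C_{G_{r}}(\beta')K^{l'}=C_{G_{r}}(\beta)K^{l'}=G_{r}(\psi_{\beta})$ since $\beta'\equiv\beta\bmod\mfp^{l'}\mfg_{r}$; you instead work directly with $C_{G_{r}}(\beta)=\mfo_{r}[\beta]^{\times}$ and verify ${}^{c}\theta'=\theta'$ by a trace computation, exploiting the decomposition $\mfr_{\beta}=\varpi^{l'}\mfo_{r}[\beta]+\mfp^{l}\mfg_{r}$. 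Both are valid; the paper's choice of $\beta'$ is slicker because it sidesteps the explicit computation, while your argument makes visible the underlying mechanism (and depends on the surjectivity of the reduction $\mfo_{r}[\beta]\to\F_{q}[\bar{\beta}]$, a consequence of Hill's Theorem~3.6 cited earlier in the paper, which you should make explicit when asserting the decomposition of $\mfr_{\beta}$).
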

\begin{proof}
Let $\theta'\in\Irr(\mfr_{\beta})$ be the unique extension of $\theta$
that corresponds to $\sigma$. Choose $\beta'\in\mfg_{r}$ such that
$\phi_{\beta'}\in\Irr(\mfg_{r})$ is an extension of $\theta'$. Then
$\phi_{\beta'}$ is also an extension of $\theta$, so $\beta'\equiv\beta\mod\mfp^{l'}\mfg_{r}$,
and by (\ref{eq:Stabiliser}) we have
\[
G_{r}(\sigma)\subseteq G_{r}(\log^{*}\theta)=C_{G_{r}}(\beta)K^{l'}=C_{G_{r}}(\beta')K^{l'},
\]
where the first inclusion follows from the fact that $\log^{*}\theta$
is the unique irreducible character of $K^{l}$ contained in $\sigma$
(the orbit of the restriction of $\sigma$ to $K^{l}$ consists of
copies of $\psi_{\beta}$ since $K^{l'}$ stabilises $\psi_{\beta}$).

For the reverse inclusion, note that $C_{G_{r}}(\beta')$ stabilises
$\phi_{\beta'}$, hence its restriction $\theta'$, and hence the
character $\log^{*}\theta'$. Since $\sigma$ is the unique representation
in $\Irr(K^{l'}\mid\log^{*}\theta')$, $\sigma$ is stabilised by
$C_{G_{r}}(\beta')$, and so $C_{G_{r}}(\beta')K^{l'}\subseteq G_{r}(\sigma)$.
\end{proof}
We now explain how to show that $\sigma$ extends to the stabiliser
$G_{r}(\psi_{\beta})$. For this, it will be enough (by Lemma~\ref{lem:Stab-sigma-beta}
and \cite[Corollary~11.31]{Isaacs}) to show that $\sigma$ extends
to the $p$-Sylow subgroup of $G_{r}(\psi_{\beta})$ (which is unique
since $G_{r}(\psi_{\beta})$ is abelian modulo the $p$-group $K^{l'}$).
Let $P_{\beta}$ denote the $p$-Sylow subgroup of $C_{G_{r}}(\beta)$.
The following crucial lemma, see \cite[Lemma~3.4]{KOS}, goes back
to Howe:
\begin{lem}
Let $V$ be a finite dimensional $\F_{p}$-vector space and $\alpha$
an antisymmetric bilinear form on $V$. Suppose that $P$ is a $p$-group
which acts on $V$ and preserves $\alpha$. Then there exists a maximal
isotropic subspace $U$ of $V$ which is $P$-invariant.
\end{lem}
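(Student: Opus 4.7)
The plan is to use the standard fixed point principle for $p$-groups: a $p$-group acting on a finite set whose cardinality is coprime to $p$ must have a fixed point. So I will let $\mathcal{M}$ denote the set of all maximal isotropic subspaces of $(V, \alpha)$. Since $P$ preserves $\alpha$, it permutes $\mathcal{M}$, and a $P$-fixed point in $\mathcal{M}$ is exactly the desired $P$-invariant maximal isotropic subspace. It therefore suffices to show $|\mathcal{M}|$ is coprime to $p$.

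First I would reduce to the non-degenerate case. Let $R = \{v \in V : \alpha(v, w) = 0 \text{ for all } w \in V\}$ be the radical of $\alpha$. Because $P$ preserves $\alpha$, the radical $R$ is $P$-stable, and $\alpha$ descends to a non-degenerate alternating form $\bar{\alpha}$ on $\bar{V} := V/R$ on which $P$ acts preserving $\bar{\alpha}$. Any maximal isotropic subspace $U$ of $V$ must contain $R$ (otherwise $U + R$ would be a strictly larger isotropic subspace), so $U \mapsto U/R$ is a bijection between $\mathcal{M}$ and the set $\bar{\mathcal{M}}$ of Lagrangian subspaces of $(\bar{V}, \bar{\alpha})$, and this bijection is $P$-equivariant. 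Thus it is enough to find a $P$-fixed Lagrangian in $\bar{V}$.

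Next I would count Lagrangians in a non-degenerate symplectic $\F_p$-space. Writing $\dim \bar{V} = 2n$, a standard count (choose an ordered isotropic basis one vector at a time, divide by $|\operatorname{GL}_n(\F_p)|$) gives
\[
|\bar{\mathcal{M}}| \;=\; \prod_{i=1}^{n}(p^i + 1).
\]
Each factor is $\equiv 1 \pmod{p}$, so $|\bar{\mathcal{M}}|$ is coprime to $p$.

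Finally, since $P$ is a $p$-group acting on the finite set $\bar{\mathcal{M}}$, every non-trivial $P$-orbit has size divisible by $p$, so the number of fixed points equals $|\bar{\mathcal{M}}| \pmod{p}$, which is non-zero. Pick any $P$-fixed Lagrangian $\bar{U} \in \bar{\mathcal{M}}$ and let $U$ be its preimage in $V$; then $U \in \mathcal{M}$ is $P$-invariant. The only step requiring any real thought is the reduction to the non-degenerate case and the verification that $U \mapsto U/R$ is the right equivariant bijection; the rest is a standard orbit count, and there is no serious obstacle.
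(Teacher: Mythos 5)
Your proof is correct. A small observation first: the paper does not actually prove this lemma — it is stated without proof, attributed to Howe, with a pointer to \cite[Lemma~3.4]{KOS} — so there is no in-paper argument to compare against. That said, your counting argument is the standard and clean route: reduce modulo the $P$-stable radical $R$ to a non-degenerate symplectic space, observe that every maximal isotropic of $V$ contains $R$ so that $U\mapsto U/R$ is a $P$-equivariant bijection onto the Lagrangians of $V/R$, count Lagrangians as $\prod_{i=1}^n(p^i+1)\equiv 1\pmod p$, and apply the $p$-group fixed-point lemma. All steps are justified. One tiny point worth flagging: you implicitly read ``antisymmetric'' as ``alternating,'' which is the intended meaning and is automatic when $p>2$ (the only case in which the lemma is used in Section~6); if one allowed $p=2$ with a symmetric-but-not-alternating form, the Lagrangian count would be different, but that is outside the scope of the application. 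An equally common alternative proof is inductive — find a nonzero $P$-fixed vector $v$ (again by the $p$-group orbit count on $V\setminus\{0\}$), then pass to $v^{\perp}/\langle v\rangle$ — and either works; yours has the advantage of producing the clean numerical reason ($\prod(p^i+1)$ coprime to $p$) in one stroke.
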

The group $P_{\beta}$ acts on $K^{l'}$ and $K^{l}$ by conjugation,
and hence induces an action on the vector space $K^{l'}/K^{l}$. By
the above lemma, there exists a maximal isotropic subspace of $K^{l'}/K^{l}$
which is stable under this action of $P_{\beta}$, that is, there
is a subgroup $K^{l}\subseteq J_{\beta}\subseteq K^{l'}$, such that
the image of $J_{\beta}$ in $K^{l'}/K^{l}$ is a maximal isotropic
subspace and such that $J_{\beta}$ is normalised by $P_{\beta}$.
As in the proof of Lemma~\ref{lem:Stab-sigma-beta}, let $\theta'\in\Irr(\mfr_{\beta})$
be the unique extension of $\theta$ that corresponds to $\sigma$
and $\phi_{\beta}\in\Irr(\mfg_{r})$ an extension of $\theta'$. Then
the restriction $\phi_{\beta}|_{\mfj_{\beta}}$ is stabilised by $P_{\beta}$
(because it is stabilised by all of $C_{G_{r}}(\beta)$), and thus
\[
\log^{*}(\phi_{\beta}|_{\mfj_{\beta}})
\]
is a character of $J_{\beta}$ (by Lemma~\ref{lem:KOS-crucial}),
which is stabilised by $P_{\beta}$. Here we again see the crucial
role played by Lemma~\ref{lem:KOS-crucial} as well as the order
in which choices are made: For any $\sigma\in\Irr(K^{l'}\mid\psi_{\beta})$,
there is a unique $\theta'\in\Irr(\mfr_{\beta})$, and this extends
to a $\theta''\in\Irr(\mfj_{\beta})$ such that $\log^{*}\theta''\in\Irr(J_{\beta})$
is stabilised by $C_{G_{r}}(\beta)$.

Since $\log^{*}(\phi_{\beta}|_{\mfj_{\beta}})$ is one-dimensional
and $P_{\beta}$ is abelian, this character extends to a character
$\omega\in\Irr(P_{\beta}J_{\beta})$. The induced representation 
\[
\sigma':=\Ind_{P_{\beta}J_{\beta}}^{P_{\beta}K^{l'}}\omega
\]
has dimension
\[
[P_{\beta}K^{l'}:P_{\beta}J_{\beta}]=\frac{|P_{\beta}|\cdot|K^{l'}|/|P_{\beta}\cap K^{l'}|}{|P_{\beta}|\cdot|J_{\beta}|/|P_{\beta}\cap J_{\beta}|}=\frac{|P_{\beta}\cap J_{\beta}|}{|P_{\beta}\cap K^{l'}|}[K^{l'}:J_{\beta}].
\]
Since $J_{\beta}$ contains the group $(C_{G_{r}}(\beta)\cap K^{l'})K^{l}$
(since every maximal isotropic subspace contains the radical of the
form), we have $P_{\beta}\cap J_{\beta}\supseteq P_{\beta}\cap K^{l'}$.
The reverse inclusion is trivial, so we have $\dim\sigma'=[K^{l'}:J_{\beta}]=\dim\sigma$.
Since $\sigma'$ must contain $\sigma$ on restriction to $K^{l'}$
(because $\sigma'$ contains $\log^{*}\theta'$), $\sigma'$ must
be an extension of $\sigma$ (so in particular, $\sigma'$ must be
irreducible). Thus $\sigma$ extends to the $p$-Sylow in $G_{r}(\psi_{\beta})$
and hence to all of $G_{r}(\psi_{\beta})$, by the above remarks.
This concludes the proof of Theorem~\ref{thm:KOS}.

\section{\label{sec:SS}The construction of Stasinski and Stevens}

In this section we summarise forthcoming work of Stasinski and Stevens
\cite{SS/2016} which gives a construction of all the regular representations
of $G_{r}=\GL_{N}(\mfo_{r})$, without any restriction on the residue
characteristic. As in the previous two sections, we assume that $r=l+l'$
is odd. 

One of the key distinguishing features of the present approach is
the systematic use of the subgroup structure of $G_{r}$ provided
by lattice chains. In particular, for a given regular orbit, two specific
associated parahoric subgroups and their filtrations will play a crucial
role. The construction is somewhat analogous to the construction of
supercuspidal representations of Bushnell and Kutzko \cite{BushnellKutzko},
but with the difference that for us everything takes place inside
$G_{r}$ and all relevant centralisers are abelian (because we consider
only regular representations).

\subsection{Subgroup structure\label{subsec:Subgroup-structure}}

Let $\mfA\subseteq\mfg_{r}=\M_{N}(\mfo_{r})$ be a parahoric subalgebra,
that is, the preimage under the reduction mod $\mfp$ map of a parabolic
subalgebra of $\mfg_{1}=\M_{N}(\F_{q})$. Let $\mfP$ denote the preimage
of the corresponding nilpotent radical of the parabolic subalgebra.
A parabolic subalgebra of $\mfg_{1}$ is the stabiliser of a flag,
and as such is $G_{1}$-conjugate to a block upper triangular subalgebra
of $\mfg_{1}$. The nilpotent radical of a parabolic subalgebra in
block form is the subalgebra obtained by replacing each diagonal block
by a $0$-block of the same size. Define the following subgroups of
$G_{r}$: 
\[
U=U^{0}=\mfA^{\times},\quad U^{m}=1+\mathfrak{\mfP}^{m},\text{ for }m\geq1.
\]
Let $e=e(\mfA)$ be the length of the flag in $\mfg_{1}$ defining
$\mfA$. Then it can be shown that
\begin{equation}
\mfp\mfA=\mfA\mfp=\mfP^{e}\label{eq:pA=00003DPe}
\end{equation}
and one can think of $e$ as a ramification index. We have a filtration
\[
U\supset U^{1}\supset\dots\supset U^{er-1}\supset U^{er}=\{1\},
\]
where the inclusions can be shown to be strict. It is also convenient
to define $U^{i}=\{1\}$ for all $i>er$. Since $\mathfrak{P}$ is
a (two-sided) ideal in $\mathfrak{A}$, each group $U^{i}$ is normal
in $U$. Moreover, we have the commutator relation 
\[
[U^{i},U^{j}]\subseteq U^{i+j}.
\]
Thus in particular, the group $U^{i}$ is abelian whenever $i\geq er/2$.

From now on, let $\beta\in\mfg_{r}$ be a regular element and write
$\bar{\beta}$ for its image in $\mfg_{1}$. We will associate a certain
parahoric subalgebra to $\beta$ (or rather, to the orbit of $\bar{\beta}$),
which will be denoted by $\Amin$. Let 
\[
\prod_{i=1}^{h}f_{i}(x)^{m_{i}}\in\F_{q}[x]
\]
be the characteristic polynomial of $\bar{\beta}$, where the $f_{i}(x)$
are distinct and irreducible of degree $d_{i}$, for $i=1,\dots,h$.
This determines a partition of $n$:
\[
\lambda=(d_{1}^{m_{1}},\dots,d_{h}^{m_{h}})=(\underbrace{d_{1},d_{1},\dots,d_{1}}_{m_{1}\text{ times}},\dots,\underbrace{d_{h},d_{h},\dots,d_{h}}_{m_{h}\text{ times}}).
\]
We define $\Amin\subseteq\mfg_{r}$ to be the preimage of the standard
parabolic subalgebra of $\mfg_{1}$ corresponding to $\lambda$ (i.e.,
the block upper-triangular subalgebra whose block sizes are given
by $\lambda$, in the order given above). Moreover, we let $\Amax=\mfg_{r}=\M_{N}(\mfo_{r})$
be the full matrix algebra. Let $\Pmin$ and $\Pmax$ be the corresponding
ideals in $\Amin$ and $\Amax$, respectively. For $*\in\{\mathrm{m},\mathrm{M}\}$
we have the corresponding groups 
\[
U_{*}=U_{*}^{0}=\mfA_{*}^{\times},\quad U_{*}^{i}=1+\mfP_{*}^{i},\qquad\text{for }i\geq1,
\]
and the filtration 
\[
U_{*}\supset U_{*}^{1}\supset\dots\supset U_{*}^{e_{*}r}=\{1\},
\]
where $e_{*}=e(\mfA_{*})$. Note that $\Umax^{i}=K^{i}$. and $\emax=1$.
The label $\mathrm{m}$ here stands for ``minimal'', while $\mathrm{M}$
stands for ``maximal''. From the definitions, we have 
\begin{align*}
\Umin/\Umin^{1} & \cong\prod_{i=1}^{h}\GL_{d_{i}}(\F_{q}))^{m_{i}},\\
\Umax/\Umax^{1} & \cong\GL_{N}(\F_{q}).
\end{align*}
Note that if $\bar{\beta}$ has irreducible characteristic polynomial,
then $\Amin=\mfg_{r}$, and $\Umin^{i}=K^{i}$ are the normal subgroups
defined earlier. 

By definition, we have $\Amax\supseteq\Amin$, and therefore $\Pmin\supseteq\Pmax$.
The relations $\Amax\supseteq\Amin\supseteq\Pmin\supseteq\Pmax$ imply
that for every $i\geq1$, $\Pmax^{i}$ is a two-sided ideal in $\Amin$,
so $\Umin$ normalises $\Umax^{i}$. For $*\in\{\mathrm{m},\mathrm{M}\}$,
we can therefore define the following groups
\begin{align*}
C & =C_{G_{r}}(\beta),\\
J_{*} & =(C\cap U_{*})U_{*}^{e_{*}l'},\\
J_{*}^{1} & =(C\cap U_{*}^{1})U_{*}^{e_{*}l'},\\
H_{*}^{1} & =(C\cap U_{*}^{1})U_{*}^{e_{*}l'+1}.
\end{align*}
Recall that since $\beta$ is regular, $C$ is abelian. Since $[U_{*}^{1},U_{*}^{e_{*}l'}]\subseteq U_{*}^{e_{*}l'+1}$
and $\mfA_{*}^{\times}$ normalises $U_{*}^{e_{*}l'}$, the group
$J_{*}$ normalises both $J_{*}^{1}$ and $H_{*}^{1}$. Moreover,
we define the group
\[
\JmM=(C\cap\Umin^{1})K^{l'}.
\]
We have the following diagram of subgroups, where the vertical and
slanted lines denote inclusions (we have only indicated the inclusions
which are relevant to us and repeat the definitions of the groups,
for the reader's convenience).$$
\begin{tikzcd}[column sep=0.4cm] 
{} & CK^{l'}\arrow[dash]{d}\\
{} &  \JmM\arrow[dash]{dl}\arrow[dash]{d}\\
\Jmin^1\arrow[dash]{d} & \Jmax^1\arrow[dash]{dd}\\ 
\Hmin^1\arrow[dash]{dr} & {}  \\ 
{}  & \Hmax^1\arrow[dash]{d}\\
{}  & K^l
\end{tikzcd}
\qquad\quad
\begin{aligned} 
\JmM &= (C\cap\Umin^{1})K^{l'},\\
\\
\Jmin^{1} & =(C\cap \Umin^{1})\Umin^{\emin l'},\\
\Hmin^{1} & =(C\cap \Umin^{1})\Umin^{\emin l'+1},\\ 
\Jmax^{1} & =(C\cap K^{1})K^{l'},\\
\Hmax^{1} & =(C\cap K^{1})K^{l}.\end{aligned}
$$We explain the non-trivial inclusions in the above diagram. Since
$\Pmin\supseteq\Pmax$, we have $\Umin^{1}\supseteq K^{1}$ and 
\[
\Umin^{e_{\mathrm{m}}l'+1}=1+\mfp^{l'}\Pmin\supseteq1+\mfp^{l'}\Pmax=K^{l};
\]
thus $\Hmin^{1}\supseteq\Hmax^{1}$. Moreover, 
\[
\Umin^{e_{\mathrm{m}}l'}=1+\mfp^{l'}\Amin\subseteq1+\mfp^{l'}\Amax=K^{l'},
\]
so $\JmM$ contains both $\Jmin^{1}$ and $\Jmax^{1}$ as subgroups.
We remark that $\Jmax^{1}$ is normal in $CK^{l'}$ since $C$ normalises
both $K^{1}$ and $K^{l'}$, and $[K^{l'},K^{1}]\subseteq K^{l}\subseteq K^{l'}$. 

The following lemma will be used in Step~\ref{Step: extn-JmM-JM}
of the construction we will outline below, and is the main reason
why we work with the algebra $\Amin$ and its associated subgroups. 
\begin{lem}
\label{lem:normal-p-Sylow}There exists a $G_{r}$-conjugate of $\beta$
such that the group $\JmM$ is a normal $p$-Sylow subgroup of $CK^{l'}$.
\end{lem}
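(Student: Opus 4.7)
My plan is to separate the statement into three pieces: (i) $\JmM$ is a $p$-subgroup of $CK^{l'}$; (ii) $\JmM$ is normal in $CK^{l'}$; and (iii) for a suitable $G_{r}$-conjugate of $\beta$, the index $[CK^{l'}:\JmM]$ is prime to $p$. Pieces (i) and (ii) hold for every regular $\beta$ and are essentially formal. Since $C\cap\Umin^{1}\subseteq\Umin^{1}=1+\Pmin$ and $K^{l'}=1+\mfp^{l'}\mfg_{r}$ are both finite $p$-groups, and $K^{l'}$ is normal in $G_{r}$, the product $\JmM=(C\cap\Umin^{1})K^{l'}$ is a $p$-subgroup. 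For normality, $K^{l'}\triangleleft G_{r}$ is contained in $\JmM$, so it suffices to check that $\JmM/K^{l'}$ is normal in $CK^{l'}/K^{l'}$; but $CK^{l'}/K^{l'}\cong C/(C\cap K^{l'})$, which is abelian since $C=\mfo_{r}[\beta]^{\times}$ is.

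The substance lies in (iii). Using $K^{l'}\subseteq\Umin^{1}$ (which holds because $\Pmax=\mfp\mfg_{r}\subseteq\Pmin$) and the second isomorphism theorem, one has
\[
CK^{l'}/\JmM\cong C/(C\cap\Umin^{1}).
\]
The reduction map $C\to C_{G_{1}}(\bar{\beta})$ is surjective by the centraliser property for regular elements recalled in Section~\ref{sec:Reg-reps}, and an element $c\in C$ lies in $\Umin^{1}$ iff $\bar{c}\in 1+\bar{\Pmin}$, where $\bar{\Pmin}\subseteq\mfg_{1}$ denotes the image of $\Pmin$ (again using $\mfp\mfg_{r}\subseteq\Pmin$). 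Hence
\[
C/(C\cap\Umin^{1})\cong C_{G_{1}}(\bar{\beta})/\bigl(C_{G_{1}}(\bar{\beta})\cap(1+\bar{\Pmin})\bigr).
\]
Since $\bar{\beta}$ is regular, $C_{G_{1}}(\bar{\beta})=\F_{q}[\bar{\beta}]^{\times}$, whose Sylow $p$-subgroup is the unipotent part $1+\mathrm{rad}(\F_{q}[\bar{\beta}])$, with prime-to-$p$ complement $\prod_{i}\F_{q^{d_{i}}}^{\times}$. It therefore suffices to choose a conjugate of $\beta$ for which $C_{G_{1}}(\bar{\beta})\cap(1+\bar{\Pmin})$ equals this $p$-Sylow.

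To produce the conjugate I use the Jordan--Chevalley decomposition $\bar{\beta}=\bar{s}+\bar{n}$, available over the perfect field $\F_{q}$. First conjugate in $G_{1}$ so that $\bar{s}$ takes the prescribed block-diagonal form in the $\lambda$-block structure, each diagonal block being the companion matrix of the appropriate $f_{i}$; regularity of $\bar{\beta}$ then forces $\bar{n}$, which now sits in $C_{\mfg_{1}}(\bar{s})\cong\prod_{i}\M_{m_{i}}(\F_{q^{d_{i}}})$, to be a regular nilpotent in each factor, so a further conjugation by $C_{G_{1}}(\bar{s})$ places $\bar{n}$ in a strict super-block-diagonal shape in the $\lambda$-block structure, which lies in $\bar{\Pmin}$. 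Lifting the composed conjugation from $G_{1}$ to $G_{r}$ via the surjection $G_{r}\twoheadrightarrow G_{1}$ and replacing $\beta$ accordingly, one may assume $\bar{\beta}$ has this shape. The inclusion $1+\mathrm{rad}(\F_{q}[\bar{\beta}])\subseteq 1+\bar{\Pmin}$ then follows from $\prod_{i}f_{i}(\bar{s})=0$ together with the expansion of $\prod_{i}f_{i}(\bar{s}+\bar{n})$, which places $\prod_{i}f_{i}(\bar{\beta})$ in $\bar{n}\cdot\F_{q}[\bar{\beta}]\subseteq\bar{\Pmin}$ (using that $\bar{\Pmin}$ is a two-sided ideal in $\bar{\Amin}$). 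The reverse inclusion is obtained by projecting into the Levi quotient $\bar{\Amin}/\bar{\Pmin}\cong\prod_{i}\M_{d_{i}}(\F_{q})^{m_{i}}$, in which $P(\bar{\beta})$ is sent to the tuple of evaluations of $P$ on the diagonal companion-matrix blocks, and then applying the Chinese remainder theorem in $\F_{q}[x]$ to conclude that $P(\bar{\beta})\in 1+\bar{\Pmin}$ forces $\prod_{i}f_{i}$ to divide $P-1$, whence $P(\bar{\beta})$ is unipotent. I expect the main obstacle to be this last double inclusion, which requires simultaneously controlling the block structure of the Jordan--Chevalley decomposition and the multiplicative structure of $\F_{q}[\bar{\beta}]$; the rest is soft group theory of the kind already carried out in the paper.
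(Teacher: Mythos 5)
Your argument is correct and follows the same overall strategy as the paper's sketch: conjugate $\beta$ into a standard form adapted to the partition $\lambda$, reduce the quotient $CK^{l'}/\JmM$ to an index computation at the level of $G_{1}$, and then show that index is prime to $p$. A few points of comparison are worth recording. First, your normality argument (quotienting by $K^{l'}$ and using abelianness of $C/(C\cap K^{l'})$) is slightly cleaner than the paper's, which argues via the fact that in a finite group $G$ with normal $N\trianglelefteq G$ and subgroup $H$, the product $HN$ is normalised by $N$. Second, you avoid the paper's intermediate claim that $C\subseteq\Umin$; instead you directly observe $C\cap\JmM=C\cap\Umin^{1}$ (using $K^{l'}\subseteq\Umin^{1}$) to get $CK^{l'}/\JmM\cong C/(C\cap\Umin^{1})$, and only later, implicitly, recover $\F_{q}[\bar{\beta}]\subseteq\bar{\Amin}$ from the normal form. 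Third, where the paper passes to the Levi $\Umin/\Umin^{1}\cong\prod_{i}\GL_{d_{i}}(\F_{q})^{m_{i}}$ and cites surjectivity of centralisers, you work more explicitly, pinning down $C_{G_{1}}(\bar{\beta})\cap(1+\bar{\Pmin})$ as $1+\mathrm{rad}(\F_{q}[\bar{\beta}])$ via the Jordan--Chevalley decomposition and the ideal property of $\bar{\Pmin}$; this fills in precisely the detail the paper elides. Both routes deliver the same conclusion; yours is more self-contained at the cost of a slightly longer computation for the double inclusion.
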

We sketch the proof of this lemma. We first show that $\JmM$ is normal
in $\Jmax$. Since $C\cap\Amax^{\times}$ normalises $\JmM$ ($C$
being abelian), it is enough to observe that  $\Umax^{e_{\mathrm{M}}l'}$
normalises $\JmM$ (in any finite group $G$ with a normal subgroup
$N$ and a subgroup $H$, the group $HN$ is normalised by $N$; here
$G$ would be $\Umax$). Write $\beta_{\mathrm{m}}$ for the image
of $\beta$ in $\Umin/\Umin^{1}$. Then, up to conjugating $\beta$,
we have
\[
\beta_{\mathrm{m}}=\underbrace{\beta_{1}\oplus\dots\oplus\beta_{1}}_{m_{1}\text{ times}}\oplus\dots\oplus\underbrace{\beta_{h}\oplus\dots\oplus\beta_{h}}_{m_{h}\text{ times}},
\]
where $\beta_{i}\in\M_{d_{i}}(\F_{q})$, and $d_{i}$ and $m_{i}$
are as in the partition $\lambda$ above. With $\beta_{\mathrm{m}}$
of the above form, one can show that $\beta$ being regular implies
that $C\subseteq\Umin$, so we have an isomorphism
\[
CK^{l'}/\JmM\cong\frac{C}{(C\cap\Umin^{1})(C\cap K^{l'})}=\frac{C\cap\Umin}{(C\cap\Umin^{1})}.
\]
Then the isomorphism $\Umin/\Umin^{1}\cong\prod_{i=1}^{h}\GL_{d_{i}}(\F_{q})^{m_{i}}$
induces an isomorphism
\[
\frac{C\cap\Umin}{C\cap\Umin^{1}}\cong\prod_{i=1}^{h}C_{\GL_{d_{i}}(\F_{q})}(\beta_{i})^{m_{i}}.
\]
Each $\beta_{i}$ has irreducible characteristic polynomial over $\F_{q}$,
so $\F_{q}[\beta_{i}]/\F_{q}$ is an extension of degree $d_{i}$.
Since $C_{\GL_{d_{i}}(\F_{q})}(\beta_{i})=\F_{q}[\beta_{i}]^{\times}$,
we conclude that $p$ does not divide the order of $C_{\GL_{d_{i}}(\F_{q})}(\beta_{i})$.
Therefore, $p$ does not divide the order of $\frac{C}{C\cap\Umin^{1}}$,
so $\JmM$ is a $p$-Sylow subgroup of $CK^{l'}$ (in fact the unique
$p$-Sylow subgroup, since it is normal).

\subsection{Characters}

Let $\psi:F\rightarrow\mathbb{C}^{\times}$ be as in Section~\ref{sec:Clifford-theory}.
Let $\mathfrak{A},\mathfrak{P}$, and $U^{m},m\geq0$ be the objects
associated to an arbitrary flag of length $e$, as in Section~\ref{subsec:Subgroup-structure}.
Let $n$ and $m$ be two integers such that $e(r-1)+1\geq n>m\geq n/2>0$.
Then $U^{m}/U^{n}$ is abelian, and we have an isomorphism 
\[
\mathfrak{P}^{m}/\mathfrak{P}^{n}\longiso U^{m}/U^{n},\qquad x+\mathfrak{P}^{n}\longmapsto(1+x)U^{n}.
\]
Each $a\in\mfg_{r}$ defines a character $\mfg_{r}\rightarrow\mathbb{C}^{\times}$
via $x\mapsto\psi(\Tr(ax))$, and this defines an isomorphism $\mfg_{r}\rightarrow\Irr(\mfg_{r})$.
For any subgroup $S$ of $\mfg_{r}$, define 
\[
S^{\perp}=\{x\in\mfg_{r}\mid\psi(\Tr(xS))=1\}.
\]
Using the isomorphism $\mfg_{r}\rightarrow\Irr(\mfg_{r})$, we can
identify $S^{\perp}$ with the group of characters of $\mfg_{r}$
which are trivial on $S$. 

For any $\beta\in\mathfrak{P}^{e(r-1)+1-n}$ define a character $\psi_{\beta}:U^{m}\rightarrow\mathbb{C}^{\times}$
by 
\[
\psi_{\beta}(1+x)=\psi(\varpi^{-r}\Tr(\beta x)).
\]

\begin{lem}
\label{lem:characters}Let $e(r-1)+1\geq n>m\geq n/2>0$. Then

\begin{enumerate}
\item \label{enu:ortho}For any integer $i$ such that $0\leq i\leq e(r-1)+1$,
we have 
\[
(\mathfrak{P}^{i})^{\perp}=\mathfrak{P}^{e(r-1)+1-i}.
\]
\item \label{enu:chariso}The map $\beta\mapsto\psi_{\beta}$ induces an
isomorphism 
\[
\mathfrak{P}^{e(r-1)+1-n}/\mathfrak{P}^{e(r-1)+1-m}\longiso\Irr(U^{m}/U^{n}).
\]
\end{enumerate}
\end{lem}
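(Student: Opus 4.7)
My plan is to prove (i) first and then deduce (ii) formally from it, using the non-degeneracy of the pairing on $\mfg_r\times\mfg_r$ and the isomorphism $\mfP^m/\mfP^n\iso U^m/U^n$ recorded just before the lemma. The main obstacle will be a cardinality count in (i); everything else is routine.

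For the inclusion $\mfP^{e(r-1)+1-i}\subseteq(\mfP^i)^\perp$ in (i), I would use $\mfP^e=\mfp\mfA$ (together with $\mfA^2=\mfA$ and $\mfP\mfA=\mfP$) to obtain $\mfP^{e(r-1)}=\mfp^{r-1}\mfA$, and hence $\mfP^i\cdot\mfP^{e(r-1)+1-i}\subseteq\mfP^{e(r-1)+1}=\mfp^{r-1}\mfP$. Since $\mfP$ reduces mod $\mfp$ to the nilpotent radical of the parabolic subalgebra in $\mfg_1$, whose elements have zero trace, $\tr(\mfP)\subseteq\mfp$ and $\tr(\mfp^{r-1}\mfP)\subseteq\mfp^r=0$ in $\mfo_r$, giving $\psi(\varpi^{-r}\tr(\beta x))=1$ for $\beta\in\mfP^{e(r-1)+1-i}$ and $x\in\mfP^i$. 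For the reverse inclusion, non-degeneracy yields $|(\mfP^i)^\perp|=|\mfg_r|/|\mfP^i|$, so it suffices to show $|\mfP^i|\cdot|\mfP^{e(r-1)+1-i}|=|\mfg_r|=q^{rN^2}$. Conjugating, one may take $\mfA$ to be the standard block upper-triangular parahoric with block sizes $\lambda=(d_1,\dots,d_e)$; a direct inspection (or an induction on $j$ using $\mfP^{j+1}=\mfP\cdot\mfP^j$) shows that the $(s,t)$-block of $\mfP^j$ equals $\mfp^{a(j,s,t)}\cdot\M_{d_s\times d_t}(\mfo_r)$ with $a(j,s,t)=\lceil(j+s-t)/e\rceil$. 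For $0\le j\le e(r-1)+1$ one has $a(j,s,t)\in[0,r]$, so $\log_q|\mfP^j|=\sum_{s,t}d_sd_t(r-a(j,s,t))$. Writing $u=i+s-t$ as $qe+s_0$ with $0\le s_0<e$ and computing two cases ($s_0=0$ and $s_0\ge 1$) gives the identity $a(i,s,t)+a(e(r-1)+1-i,t,s)=r$; summing over $(s,t)$ after swapping the dummy indices in one copy yields $\log_q|\mfP^i|+\log_q|\mfP^{e(r-1)+1-i}|=r\sum_{s,t}d_sd_t=rN^2$, as required.

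For (ii), fix $\beta\in\mfP^{e(r-1)+1-n}$. By (i) one has $\beta\in(\mfP^n)^\perp$, so $\psi_\beta$ is trivial on $U^n$. Multiplicativity of $\psi_\beta$ on $U^m$ amounts to showing $\psi(\varpi^{-r}\tr(\beta xy))=1$ for $x,y\in\mfP^m$; since $2m\ge n$ one has $xy\in\mfP^{2m}\subseteq\mfP^n$, and the vanishing follows again from $\beta\in(\mfP^n)^\perp$. Hence $\beta\mapsto\psi_\beta$ defines a homomorphism $\mfP^{e(r-1)+1-n}\to\Irr(U^m/U^n)$ with kernel $\mfP^{e(r-1)+1-n}\cap(\mfP^m)^\perp=\mfP^{e(r-1)+1-m}$ (again by (i)), producing an injection $\mfP^{e(r-1)+1-n}/\mfP^{e(r-1)+1-m}\hookrightarrow\Irr(U^m/U^n)$. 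It is an isomorphism by the cardinality count $|\mfP^{e(r-1)+1-n}|/|\mfP^{e(r-1)+1-m}|=|\mfP^m|/|\mfP^n|=|U^m/U^n|=|\Irr(U^m/U^n)|$, where the first equality uses the identity $|\mfP^j|\cdot|\mfP^{e(r-1)+1-j}|=|\mfg_r|$ proved in (i).
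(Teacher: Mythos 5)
Your proof is correct, and it follows the approach the paper sketches: the containment $\mfP^{e(r-1)+1-i}\subseteq(\mfP^i)^{\perp}$ comes from $\mfP^{e(r-1)+1}=\mfp^{r-1}\mfP$ being strictly block-upper triangular mod $\mfp^r$ (so traceless mod $\mfp^r$), and the reverse containment is a cardinality count; part (ii) is then the standard formal consequence via the isomorphism $\mfP^m/\mfP^n\iso U^m/U^n$. The explicit block-valuation formula $a(j,s,t)=\lceil(j+s-t)/e\rceil$ and the identity $a(i,s,t)+a(e(r-1)+1-i,t,s)=r$ are the natural way to make the omitted count precise.
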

We omit the proof of this lemma, and only remark that the first part
essentially follows from the observation that $j=e(r-1)+1$ is the
smallest integer such that $\mfP^{j}$ is strictly block-upper triangular
mod~$\mfp^{r}$. Indeed, $\mfP^{e(r-1)+1}=\mfp^{r-1}\mfP,$ and $\mfP$
is strictly block-upper mod $\mfp$. This implies that $\mfP^{\perp}=\mfP^{e(r-1)+1}$,
and the general case follows similarly. 

As a special case of the above, suppose that $e=1$, so that $\mfA=\mfg_{r}$
and $U^{m}=K^{m}=1+\mfp^{m}\mfg_{r}$. For any $r=n>m\geq r/2$ and
$\beta\in\mfg_{r}$, we have a character $\psi_{\beta}:K^{m}\rightarrow\mathbb{C}^{\times}$
defined as above, and the isomorphism of Lemma~\ref{lem:characters}\,\ref{enu:chariso}
becomes 
\begin{align*}
\mfg_{r}/\mathfrak{p}^{r-m}\mfg_{r} & \longiso\Irr(K^{m}),
\end{align*}
which agrees with the considerations in Section~\ref{sec:Clifford-theory}.

\subsection{Construction of representations}

For our fixed arbitrary regular element $\beta\in\mfg_{r}$, we start
with the character $\psi_{\beta}$ of $K^{l}$, and construct all
the irreducible representations of $CK^{l}$ which contain $\psi_{\beta}$.
Theorem~\ref{thm:Clifford}\,\ref{Clifford2} then yields all the
irreducible representations of $G_{r}$ with $\beta$ in their orbits.
The construction consists of a number of steps. For each step we indicate
some of the details involved.

Some of the steps can be carried out for the groups arising from the
algebras $\Amin$ and $\Amax$ simultaneously. For this purpose, we
will let $\mfA$ denote either $\Amin$ or $\Amax$, and let $\mfP$
be the radical in $\mfA$, with ``ramification index'' $e$. The
associated subgroups will be denoted by $U^{i}$, $H^{1}$, $J^{1}$.

\subsubsection*{Step 1:}

Show that $\psi_{\beta}$ has an extension $\thetaM$ to $\Hmax^{1}$.
Show that $\thetaM$ has an extension $\thetam$ to $\Hmin^{1}$.\\
\\
By Lemma~\ref{lem:characters}\,\ref{enu:chariso}, if we take
\[
m=el'+1,\qquad n=2m-1=e(r-1)+1,
\]
then $\beta$, or rather the coset $\beta+\mathfrak{\mfP}_{}^{e_{}l'}$,
defines a character on $U^{m}$, trivial on $U_{}^{n}$ by the same
formula as the one defining $\psi_{\beta}$. Since $\mathfrak{\mfP}_{}^{e_{}l'}=\mfp^{l'}\mfA_{}$,
we have a map
\[
\mfA_{}/\mathfrak{\mfP}_{}^{e_{}l'}\longrightarrow\mfg_{r}/\mfp^{l'}\mfg_{r},
\]
which sends the coset $\beta+\mathfrak{\mfP}_{}^{e_{}l'}$ to $\beta+\mfp^{l'}\mfg_{r}$.
Thus the different choices of lift of the latter coset give the different
choices of extension of $\psi_{\beta}$ to $U_{}^{e_{}l'+1}$. Our
element $\beta\in\mfA$ therefore gives rise to an extension (which
we still denote by $\psi_{\beta})$ of $\psi_{\beta}$ to $U^{el'+1}$,
defined by 
\[
\psi_{\beta}(1+x)=\psi(\varpi^{-r}\tr(\beta x)),\quad\text{for }x\in\mfP_{}^{e_{\mathrm{}}l'+1}.
\]
We now show the existence of the extensions $\thetaM$ and $\thetam$.
If $c\in C\cap U_{}^{1}$ and $x\in\mathfrak{\mfP}_{}^{e_{}l'+1}$,
then 
\begin{align*}
[c,1+x] & \in c(1+x)c^{-1}(1-x+\mfP_{}^{e_{}(r-1)+2})\\
 & =1+cxc^{-1}-x+\mfP_{}^{e_{}(r-1)+2}.
\end{align*}
By Lemma~\ref{lem:characters}\,\ref{enu:ortho}, since $\beta\in\mfA_{}$,
we have 
\begin{equation}
U_{}^{e_{}(r-1)+1}\subseteq\Ker\psi_{\beta},\label{eq:Umin-in-ker}
\end{equation}
so 
\[
\psi_{\beta}([c,1+x])=\psi(\varpi^{-r}\tr(\beta(cxc^{-1}-x)))=\psi(\varpi^{-r}\tr(c\beta xc^{-1}-\beta x))=1,
\]
where we have used that $c$ commutes with $\beta$.

Thus $C\cap U_{}^{1}$ stabilises the character $\psi_{\beta}$ on
$U^{el'+1}$, and since $C\cap U^{1}$ is abelian, this implies that
$\psi_{\beta}$ extends to $H^{1}=(C\cap U^{1})U^{e_{}l'+1}$. We
fix an extension $\thetaM$ to $\Hmax^{1}$ and an extension of $\thetaM$
to $\Hmin^{1}$, denoted $\thetam$.

\subsubsection*{Step 2:}

For $*\in\{\mathrm{m},\mathrm{M}\}$, construct the irreducible representations
$\eta_{*}$ of $J_{*}^{1}$ containing $\theta_{*}$. In particular,
show that there exists a unique representation $\etaM$ of $\Jmax^{1}$
containing $\thetaM$. \\
\\
As in the previous step, we will treat both cases simultaneously,
denoting either $\thetam$ or $\thetaM$ by $\theta$. We outline
the ingredients needed for this. First note that $\theta_{}$ is stabilised
by $J_{}^{1}$: Indeed, it is enough to show that $U_{}^{e_{}l'}$
stabilises $\theta_{}$. For $x\in\mfP_{}^{e_{}l'}$, $c\in(C\cap U_{}^{1})$
and $y\in\mfP_{}^{e_{}l'+1}$, we have
\begin{align*}
[1+x,c(1+y)] & \in(1+x)c(1+y)(1-x+x^{2}+\mfP^{e(r-1)+1})(1-y+\mfP^{e_{}(r-1)+1})c^{-1}\\
 & \subseteq(c+xc-cx+cy)(1-y)c^{-1}+\mfP^{e(r-1)+1}\\
 & \subseteq1+x-cxc^{-1}+\mfP^{e_{}(r-1)+1}.
\end{align*}
Hence, since $\psi_{\beta}$ is trivial on $U^{e(r-1)+1}$ (see (\ref{eq:Umin-in-ker}))
and $c$ commutes with $\beta$, we have 
\[
\theta([1+x,c(1+y)])=\psi_{\beta}([1+x,c(1+y)])=\psi(\varpi^{-r}\tr(c\beta xc^{-1}-\beta x))=1.
\]
Next, we have 
\[
J^{1}/H^{1}\cong\frac{U^{e_{}l'}}{(C\cap U^{e_{}l'})U^{e_{}l'+1}},
\]
and $U^{e_{}l'}/U^{e_{}l'+1}$ is isomorphic to a subgroup of $\mfg_{1}=\M_{N}(\F_{q})$.
Thus $J^{1}/H^{1}$ is a quotient of an elementary abelian $p$-group
and has the structure of a finite dimensional $\F_{q}$-vector space.
Define the alternating bilinear form 
\[
h_{\beta}:J^{1}/H^{1}\times J^{1}/H^{1}\longrightarrow\mathbb{C}^{\times},\qquad h_{\beta}(xH^{1},yH^{1})=\theta([x,y])=\psi_{\beta}([x,y]).
\]
Note that $[J^{1},J^{1}]\subseteq U^{el'+1}$, so we have $\theta([x,y])=\psi_{\beta}([x,y])$. 

Let $\overline{R}_{\beta}$ be the radical of the form $h_{\beta}$,
and let $\overline{W}_{\beta}$ be a maximal isotropic subspace (if
we need to specify which parabolic subalgebra $\mfA_{*}$ we are working
with, we will write $\overline{R}_{\beta,*}$ and $\overline{W}_{\beta,*}$,
for $*\in\{\mathrm{m},\mathrm{M}\}$). Let $R_{\beta}$ and $W_{\beta}$
denote the preimages of $\overline{R}_{\beta}$ and $\overline{W}_{\beta}$
under the map $J^{1}\rightarrow J^{1}/H^{1}$, respectively. For our
purposes, we need to determine the order of the group $W_{\beta}$
and this can be done by determining the order of $R_{\beta}$, or
equivalently, the dimension of $\overline{R}_{\beta}$ (as a vector
space over $\F_{q}$). Consider the map
\[
\rho:U^{el'}\longrightarrow U^{el'}/U^{el'+1}\longiso\mfA/\mfP,
\]
where the isomorphism is given by $(1+\varpi^{l'}x)U^{e_{\mathrm{}}l'+1}\mapsto x+\mfP$.
Let $\bar{\beta}_{\mathrm{}}$ denote the image of $\beta$ in $\mfA/\mfP$
under this map. One can then show that 
\[
R_{\beta}=(C\cap U^{1})\cdot\rho^{-1}(C_{\mfA/\mfP}(\bar{\beta})).
\]
A general result then says that there exists an extension $\theta'$
of $\theta$ to $R_{\beta}$, and, for each such extension $\theta'$,
a unique $\eta\in\Irr(J^{1}\mid\theta')$. Indeed, one shows that
there exists an extension $\theta''$ of $\theta'$ to $W_{\beta}$,
that $\eta:=\Ind_{W_{\beta}}^{J^{1}}\theta''$ is irreducible and
that $\eta$ is independent of the choice of extension $\theta''$
to $W_{\beta}$ (cf.~Section~\ref{subsec:KOS-Characters}). In particular,
it turns out that $R_{\beta,\mathrm{M}}\subseteq\Hmax^{1}$, so there
is no choice for $\theta'$ in this case, and hence there exists a
unique $\etaM\in\Irr(\Jmax^{1}\mid\thetaM)$. 

\subsubsection*{Step 3:}

Show that there exists an extension $\hatetaM$ of $\etaM$ to $\Jmax$.
\\
\\
This step can be seen as the reason for involving the ``auxiliary''
path through $\Hmin^{1}$ and $\Jmin^{1}$. In the previous step,
we constructed an irreducible representation $\eta_{\mathrm{m}}$
of $\Jmin^{1}$ containing $\theta_{\mathrm{m}}$. We now need to
determine the dimension of the induced representation 
\[
\eta:=\Ind_{\Jmin^{1}}^{\JmM}\eta_{\mathrm{m}}.
\]
The order of $R_{\beta,\mathrm{m}}$, can be used to calculate the
dimension of $\etam$, indeed $\dim\etam=[\Jmin^{1}:R_{\beta,\mathrm{m}}]^{1/2}$,
so
\[
\dim\eta=[\Jmin^{1}:R_{\beta,\mathrm{m}}]^{1/2}[\JmM:\Jmin^{1}].
\]
 Comparing this with the dimension of $\etaM$, which is $[\Jmax^{1}:\Hmax^{1}]^{1/2}=q^{N(N-1)/2}$,
it turns out that $\dim\etaM=\dim\eta$. Then, since the restricted
representation $\eta|_{\Jmax^{1}}$ contains $\thetaM$ on further
restriction to $\Hmax^{1}$, and $\etaM$ is the unique representation
of $\Jmax^{1}$ with this property, it follows that $\eta$ contains
$\etaM$ on restriction to $\Jmax^{1}$. The equality of the dimensions
then forces $\eta|_{\Jmax^{1}}=\etaM$ (and in particular, $\eta$
is irreducible). 

Furthermore, one can show that all of $CK^{l'}$ stabilises the character
$\thetaM$. Since $\JmM$ is a $p$-Sylow subgroup in $CK^{l'}$ by
Lemma~\ref{lem:normal-p-Sylow} and $\etaM$ extends to $\JmM$,
it then follows from \cite[Corollary~11.31]{Isaacs} and a theorem
of Gallagher \cite[Theorem~6]{Gallagher} that $\etaM$ has an extension
$\hatetaM$ to $CK^{l'}$ (the same extension result was used in the
end of Section~\ref{sec:KOS} for the extension from $J_{\beta}$
to $C_{G_{r}}$).

Note that $\eta_{\mathrm{m}}$ is not the only representation containing
$\theta_{\mathrm{m}}$, and therefore $\eta$ is not unique. This
does not matter for us, since we are only interested in proving that
$\etaM$ has an extension, so we only need one representation $\eta$.

We also remark that even though both $\eta$ and $\hatetaM$ are extensions
of $\etaM$, we do not know (and do not need to know) whether $\hatetaM$
is an extension of $\eta$. 

\subsubsection*{Step 4:}

The final step in the construction is to note that every irreducible
representation of $CK^{l'}$ which contains $\psi_{\beta}$ is of
the form $\hatetaM$ for some choice of extension $\thetaM$ of $\psi_{\beta}$
and some choice of extension $\hatetaM$ of $\etaM$, and that distinct
choices of $\thetaM$, as well as distinct choices of extensions $\hatetaM$
of $\etaM$, give rise to distinct representations of $CK^{l'}$. 

By a standard result in Clifford theory (Lemma~\ref{thm:Clifford})
we have a one to one correspondence between $\Irr(CK^{l'}\mid\psi_{\beta})$
and $\Irr(G_{r}\mid\psi_{\beta})$ given by induction. Thus, we have
constructed all the irreducible representations of $G_{r}$ with $\beta$
in their orbits.

Schematically, the construction is illustrated by the following diagrams
(dotted lines are extensions, dashed are Heisenberg lifts, and solid
one between $\etam$ and $\eta$ is an induction):

$$
\begin{tikzcd}[column sep=0.4cm] 
{} & CK^{l'}\arrow[dash]{d}\\
{} &  \JmM\arrow[dash]{dl}\arrow[dash]{d}\\
\Jmin^1\arrow[dash]{d} & \Jmax^1\arrow[dash]{dd}\\ 
\Hmin^1\arrow[dash]{dr} & {}  \\ 
{}  & \Hmax^1\arrow[dash]{d}\\
{}  & K^l
\end{tikzcd}
\qquad\qquad\qquad
\begin{tikzcd}
{} &  \hatetaM\\
{} &  \eta\arrow[dash]{dl}\arrow[dotted, no head]{d}\\
\etam\arrow[dashed, no head]{d} & \etaM\arrow[dashed, no head, "\exists !"]{dd}\arrow[uu, dotted, no head, bend right=50]\\ 
\theta_{\mathrm{m}}\arrow[dotted, no head]{dr} & {}  \\ 
{}  & \thetaM\arrow[dotted, no head]{d}\\
{}  & \psi_{\beta}
\end{tikzcd}
$$

\section{\label{sec:Open-problems}Open problems}

We close with a non-exhaustive list of open problems in the representation
theory of $G_{r}=\GL_{N}(\mfo_{r})$. Several other problems are suggested
in \cite[Section~1.6]{AKOV-similarityclasses-A2}.

\subsection{Beyond $\GL_{N}$}

It is natural to ask whether it is possible to construct regular representations
of reductive groups over $\mfo_{r}$ other than $\GL_{N}$. As we
have already mentioned, \cite{KOS} constructs regular representations
for $\SL_{N}(\mfo_{r})$, $p\nmid N$, as well as for unitary groups.
These cases are relatively close to $\GL_{N}$, but one may expect
that it is possible to construct the regular representations of $G(\mfo_{r})$
whenever $G$ is a sufficiently nice reductive group scheme over $\mfo$,
for example when the derived group of $G$ is simply connected and
$p$ is a very good prime. The first step is to show that under some
hypotheses on $G$, any $\beta\in\Lie(G)(\mfo_{r})$ such that $\bar{\beta}\in\Lie(G)(\F_{q})$
is regular, will have abelian centraliser in $G(\mfo_{r})$ and the
surjective mapping property of centralisers under reduction maps.

\subsection{Beyond regular representations}

Hill's construction of strongly semisimple representations (see Section\ref{sec:Hill-Takase})
shows that Clifford theoretic methods can be used to construct some
non-regular representations of $\GL_{N}(\mfo_{r})$, up to knowledge
of all the irreducible representations of $\GL_{N'}(\mfo_{r'})$ for
$N'<N$, $r'<r$. Is there a uniform construction which includes the
regular representations and the strongly semisimple representations
(and perhaps others)?

\subsection{Relation with supercuspidal types}

Henniart \cite{Henniart-appendix} and Paskunas \cite{Vytas-unicity}
have shown that every supercuspidal representation of $\GL_{N}(F)$
has a unique \emph{type} on $\GL_{N}(\mfo)$. It would be interesting
to identify the regular representations which are supercuspidal types
and determine what they map to under the inertial Langlands correspondence.

\subsection{Onn's conjectures}

For each integer $n\geq1$, let 
\[
r_{n}=r_{n}(G_{r})=\#\{\pi\in\Irr(G_{r})\mid\dim\pi=n\}.
\]
The experience with the known cases of $\GL_{2}(\mfo_{r})$ \cite{Alex_smooth_reps_GL2,Uri-rank-2},
$\GL_{3}(\mfo_{r})$ \cite{AKOV-similarityclasses-A2} and the regular
representations of $\GL_{N}(\mfo_{r})$, suggests that $r_{n}$, as
a function of $\mfo_{r}$, is rather well behaved. More precisely,
in all known cases, it is a polynomial over $\Q$ in the size $q$
of the residue field, independent of the compact DVR $\mfo$, as long
as the residue field is $\F_{q}$. Moreover, the dimensions of the
known representations of $\GL_{N}(\mfo_{r})$ are given by polynomials
in $q$, and one may ask whether this is true in general. In \cite{Uri-rank-2}
Onn made the following conjectures, which we paraphrase slightly and
state only for $\GL_{N}(\mfo_{r})$:\\

\begin{conjecture*}
[Onn]~

\begin{enumerate}
\item Suppose $\mfo$ and $\mfo'$ are two compact DVRs with maximal ideals
$\mfp$ and $\mfp'$, respectively, such that $|\mfo/\mfp|=|\mfo'/\mfp'|$.
Then there is an isomorphism of group algebras 
\[
\C[\GL_{N}(\mfo_{r})]\cong\C[\GL_{N}(\mfo_{r}')].
\]
\item For any $n\geq1$ there exists a polynomial $p_{n}(x)\in\Q[x]$ such
that for any compact DVR $\mfo$ we have 
\[
r_{n}(\GL_{N}(\mfo_{r}))=p_{n}(q),
\]
where $q=|\mfo/\mfp|$.
\item There exist finitely many polynomials $d_{1}(x),\dots,d_{h}(x)\in\Z[x]$
with $\deg d_{i}\leq\binom{N}{2}r$, such that for any compact DVR
$\mfo$ we have 
\[
\{\dim\pi\mid\pi\in\Irr(\GL_{N}(\mfo_{r})),\ \pi\ \text{primitive}\}=\{d_{1}(q),\dots,d_{h}(q)\},
\]
where $q=|\mfo/\mfp|$.
\end{enumerate}
\end{conjecture*}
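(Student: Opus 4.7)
The plan is to attack the three parts of the conjecture in order of ascending difficulty, leveraging the Clifford-theoretic framework of Sections~\ref{sec:Reg-reps}--\ref{sec:SS} as a base, and to be explicit about where current technology runs out.

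For part (ii) restricted to regular representations, the Stasinski--Stevens construction of Section~\ref{sec:SS} provides the dimension formula $\dim \pi = [G_r : CK^{l'}] \cdot \dim \hatetaM = [G_r : CK^{l'}]\cdot q^{N(N-1)/2}$ together with a parametrisation of $\Irr(G_r\mid\psi_\beta)$ by pairs consisting of an extension $\thetaM$ of $\psi_\beta$ to $\Hmax^{1}$ and an extension $\hatetaM$ of $\etaM$ to $CK^{l'}$. I would stratify the regular orbits in $\mfg_r$ by the factorisation type $\lambda=(d_{1}^{m_{1}},\dots,d_{h}^{m_{h}})$ of the characteristic polynomial of $\bar{\beta}$. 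For each fixed $\lambda$, the number of such orbits (via companion matrices, whose lifts are counted by monic polynomials in $\mfo_r[x]$), the orders of $C = \mfo_r[\beta]^{\times}$ and $C_{G_{l'}}(\beta_{l'})$, and the indices $[\Jmax^{1}:\Hmax^{1}]$ and $[\JmM:\Jmin^{1}]$ are all polynomial in $q$ with coefficients determined by $N$, $r$, and $\lambda$. Summing over the finitely many $\lambda$ of $N$ gives a polynomial $p_n^{\mathrm{reg}}(q)\in\Q[q]$ counting regular representations of dimension $n$. The same analysis settles part (iii) for regular primitive representations: for fixed $\lambda$ the dimension is a polynomial $d_\lambda(q)$, and an index computation shows $\deg d_\lambda \leq N(N-1)l'/2 + N(N-1)/2 = \binom{N}{2}r$, matching the conjectured bound.

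The main obstacle, and the point at which the plan degenerates into strategy rather than proof, is extending all three assertions to non-regular representations. My approach would be to induct on both $N$ and $r$: imprimitivity reduces representations with $\bar{\beta}=0$ to $G_{r-1}$, and Hill's construction of strongly semisimple representations (Theorem~\ref{thm:Hill-strong-ss}) together with Onn's infinitesimal induction from \cite{Uri-rank-2} can be pushed to capture a substantial portion of the remaining spectrum. The hard part is controlling what is left: by Nagornyj's result, the classification of non-regular orbits in $\mfg_r$ is a wild problem for general $r$, so any uniform argument must bypass an explicit orbit parametrisation. The hope, consistent with the known calculations for $\GL_2$ and $\GL_3$, is that the Wedderburn multiplicities $r_n$, unlike the orbit stratification itself, satisfy polynomial identities in $q$ for universal (perhaps motivic or model-theoretic) reasons, applied to the variety of pairs consisting of an orbit and a compatible character of the stabilising subgroup.

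Finally, part (i) is equivalent, by Wedderburn, to the polynomials $p_n$ in part (ii) being genuinely independent of $\mfo$ for fixed residue field $\F_q$; it thus reduces to an $\mfo$-uniform version of (ii). The subtle additional point here is that the geometry of $\mfg_r$-orbits and the structure of centraliser orders $\mfo_r[\beta]^\times$ genuinely depend on $\chara\mfo$ once $r$ exceeds the ramification index, as witnessed by the failure of the exponential and logarithm maps in small residue characteristic. A uniform treatment must therefore compare, for instance, $\GL_N(\Z_p/p^r)$ with $\GL_N(\F_p[[t]]/t^r)$ via a mechanism that sees past the non-isomorphism of the underlying groups and detects only the common polynomial shadow of $r_n$. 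In my view this is the deepest obstacle, and progress will likely have to come either from a cell decomposition of the relevant orbit spaces with polynomial counting functions independent of $\chara\mfo$, or from a transfer principle of Ax--Kochen--Denef--Loeser type applied to the representation zeta function of $G_r$.
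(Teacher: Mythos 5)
The statement you are addressing is stated in the paper as an open \emph{conjecture} (attributed to Onn in \cite{Uri-rank-2}), not a theorem; the paper offers no proof, only the one-line remark that part (ii) implies part (i). So there is nothing in the paper to compare your argument against, and any ``proof'' submitted for it should be read as a research programme rather than an answer to an exercise. To your credit, you say this yourself.

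Your treatment of the regular stratum is sound in outline and matches what can be extracted from the Stasinski--Stevens construction: the dimension of a regular representation attached to $\beta$ is $[G_{r}:CK^{l'}]\cdot q^{N(N-1)/2}$, and a short index computation gives exactly $q^{\binom{N}{2}r}$, so the degree bound in part (iii) is tight on the regular locus. Stratifying by the factorisation type $\lambda$ of the characteristic polynomial of $\bar{\beta}$ and counting lifts and centraliser orders does give polynomials in $q$; the one place you should be careful is that the ring $\mfo_{r}[\beta]$ and hence $|C|$ depends a priori on the lift $\beta$ and not only on $\lambda$, so you need the (true, but not immediate) fact that the relevant orders and indices depend only on $\lambda$ and are given uniformly by polynomials in $q$. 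None of this, however, touches the non-regular representations, and you correctly identify that as the real obstacle: Nagornyj's wildness result means a uniform orbit parametrisation is hopeless, so any proof must bypass it, e.g.\ via a cell decomposition of orbit spaces or a Denef--Loeser-style transfer principle for the representation zeta function. These are exactly the kinds of tools one would reach for, but they have not been made to work here, which is why the conjecture is open. One small imprecision: part (i) is \emph{not} equivalent to an $\mfo$-uniform version of (ii). By Wedderburn, (i) says only that $r_{n}$ depends on $q$ alone; (ii) additionally asserts polynomiality in $q$. Thus (ii) implies (i), as the paper notes, but not conversely. Your proposal does not, and as far as I can tell could not at present, close the gap between a plausible strategy and a proof; it should not be presented as more than a programme.
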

Note that part $(ii)$ of this conjecture implies part $(i)$.

\bibliographystyle{alex}
\bibliography{alex}

\end{document}